\documentclass{amsart}
\usepackage{amsmath,amssymb,amscd, amsthm,amsfonts}
\usepackage{graphicx}
\usepackage[hidelinks]{hyperref}

\newtheorem{theorem}{Theorem}[section]
\newtheorem{lemma}[theorem]{Lemma}

\newtheorem{proposition}[theorem]{Proposition}

\theoremstyle{remark}
\newtheorem{remark}[theorem]{Remark}
\newtheorem*{acknow}{Acknowledgments}

\newcommand{\abs}[1]{\lvert#1\rvert}


\begin{document}

\title[Overdetermined problems in space forms]{Overdetermined problems for fully nonlinear equations with constant Dirichlet boundary conditions in space forms }

\author{Shanze Gao}
\address{School of Mathematics and Statistics, Shaanxi Normal University, Xi’an, 710119, P. R. China}
\email{gaoshanze@snnu.edu.cn}

\author{Hui Ma}
\author{Mingxuan Yang}
\address{Department of Mathematical Sciences, Tsinghua University, Beijing, 100084, P. R. China}
\email{ma-h@mail.tsinghua.edu.cn}
\email{ymx20@mails.tsinghua.edu.cn}

\begin{abstract}
We consider overdetermined problems for two classes of fully nonlinear equations with constant Dirichlet boundary conditions in a bounded domain in space forms. We prove that if the domain is star-shaped, then the solution to the Hessian quotient overdetermined problem is radially symmetric. By establishing a Rellich-Poho\v{z}aev type identity for the $k$-Hessian equation with constant Dirichlet boundary condition, we also show the radial symmetry of the solution to the $k$-Hessian overdetermined problem for some boundary value without star-shapedness assumption of the domain.
\end{abstract}

\keywords{Overdetermined problem, Hessian equation, Space form, Rellich-Poho\v{z}aev type identity}


\maketitle

\section{Introduction}\label{sec1}

Let $(M^{n}(K), g)$ be a space form of constant sectional curvature $K$ with metric $g$. In this paper, we consider the following class of overdetermined problems:
\begin{equation}\label{Eq:hessian}
\left\{
\begin{aligned}
&\dfrac{\sigma_{k}(\nabla^{2}u+Kug)}{\sigma_{l}(\nabla^{2}u+Kug)}=\dfrac{\binom{n}{k}}{\binom{n}{l}} &\text{in }&~\overline{\Omega}, \\
&u=Kc_{1}&\text{on }&~\partial \Omega, \\
&u_{\nu}=c_{2}&\text{on }&~\partial \Omega, \\
&Ku\geq 0 &\text{in }&~\Omega,
\end{aligned}
\right.
\end{equation}
where $0\leq l<k\leq n$, $\Omega$ is a bounded, open, connected domain in $M^{n}(K)$ with a boundary $\partial \Omega$ of class $C^2$, $c_{1}$ and $c_{2}>0$ are constants, $\nu$ denotes the outward unit normal to $\partial{\Omega}$ and $\sigma_k(\nabla^2 u+Kug)$ is the $k$-th elementary symmetric function of the eigenvalues of $\nabla^2 u+Kug$ (see Section \ref{Sec:symmetricfunction}).

In a pioneer work \cite{serrin}, Serrin proved the symmetry of solutions to overdetermined problems for various elliptic differential equations in $\mathbb{R}^n$  via the moving planes method. In particular, he proved that, the solution to the Poisson equation \[ \Delta u=-1 \quad \text{in}~\Omega\subset \mathbb{R}^{n} \] with boundary conditions \[ u=0,\quad u_{\nu}=\mathrm{constant}\quad \text{on}~\partial\Omega \]  is radially symmetric and that $\Omega$ is a ball.
In a subsequent paper \cite{Weinberger}, Weinberger presented a short proof of the same result by applying the maximum principle to an auxiliary function, which is often referred to as $P$-function. Alternative proofs inspired by Weinberger's approach have been obtained in \cite[Theorem 2]{BNST}, \cite[Theorem 2.1]{MP}, and \cite[Problem I]{Payne-Schaefer}. The problems for the other operators, such as $p$-Laplacian and anisotropic $p$-Laplacian in $\mathbb{R}^{n}$, interested readers may refer to \cite{G.P, BC, Cianchi-Salani, Garofalo-Lewis, Wang-Xia, C. Xia-J. Yin, C-Li} etc.

In space forms, Molzon \cite{Molzon} used a $P$-function to prove symmetry results for the equation $\Delta u=V$ (see the definition of $V$ in Section \ref{Sec:prelim}) in $\mathbb{S}^{n}_{+}$ and used the moving planes method to obtain the symmetry results for the equation $\Delta u=-1$. Kumaresan and Prajapat \cite{K-P} got the symmetry results for the equation $\Delta u+g(u)=0$ in space forms, also by the moving planes method, where $g$ is a $C^{1}$ function, under the condition $u>0$. 
In \cite{Qiu-Xia}, Qiu and Xia used two auxiliary functions, $P$ and $\widetilde{P}$, 
to obtain the symmetry of solution for Poisson equation $\Delta u+nu=n$ in the sphere $\mathbb{S}^{n}$. Serrin-type overdetermined problems for the $k$-Hessian equation in $\mathbb{R}^{n}$ are considered by Brandolini, Nitsch, Salani and Trombetti in \cite{BNST}. Recently, Gao, Jia and Yan \cite{Gao-Jia-Yan} applied the maximum principle to a $P$-function to prove the symmetry results for the $k$-Hessian equation in hyperbolic space $\mathbb{H}^{n}$ and reproved the Theorem 2 in \cite{BNST}. For more papers about the problems in space forms, interested readers may refer to \cite{Ciraolo-Vezzoni17, Ciraolo-Vezzoni19, AF-AR} and references therein.

In this paper, we suppose that $\Omega$ is a bounded, open, connected domain in $M^{n}(K)$ with a boundary $\partial\Omega$ of class $C^{2}$. For the case $K>0$, we additionally suppose that $M^n(K)$ is the hemisphere $S^n_+(\frac{1}{\sqrt{K}})$ with radius $\frac{1}{\sqrt{K}}$.

Our first result concerns the overdetermined problem for the Hessian quotient equation.
\begin{theorem}\label{Th:QuoHess}
For given $0\leq l<k\leq n$, if $\Omega$ is star-shaped and there exists a solution $u \in C^{3}(\Omega)\cap C^{2}(\overline{\Omega})$ to problem \eqref{Eq:hessian}, then $u$ is radially symmetric and $\Omega$ is a geodesic ball in $M^n(K)$.
\end{theorem}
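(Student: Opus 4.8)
The plan is to run a Rellich--Poho\v{z}aev/Minkowski argument in the spirit of \cite{BNST}, adapted to the curved setting. Throughout write $W:=\nabla^2u+Kug$ for the modified Hessian, let $T_{k-1},T_{l-1}$ be the Newton tensors associated to $\sigma_k(W),\sigma_l(W)$, and fix a point $o\in\Omega$ with respect to which $\Omega$ is star-shaped. Let $V$ be the function from Section~\ref{Sec:prelim}, with $\nabla^2V+KVg=0$, normalized so that $V>0$ on $\overline{\Omega}$, and let $X$ be the associated conformal vector field, $\nabla X=Vg$ (so $X=\nabla\Phi$ for the radial primitive $\Phi\ge0$ with $\nabla^2\Phi+K\Phi g=g$; for $K=0$ this is $\Phi=\tfrac12 d(\cdot,o)^2$ and $X$ the position field). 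Three facts are needed at the outset: (a) in a space form $\divg T_{k-1}=\divg T_{l-1}=0$ -- this is exactly why the zeroth-order term $Kug$ is built into the operator, and follows from the Ricci identity with $R_{ijkl}=K(g_{ik}g_{jl}-g_{il}g_{jk})$; (b) speaking of a solution of \eqref{Eq:hessian} puts $W$ in the G\aa rding cone $\Gamma_k\subset\Gamma_l$, so $\sigma_l(W)>0$ in $\overline{\Omega}$ and the Newton--MacLaurin inequalities apply; (c) star-shapedness says $\langle X,\nu\rangle\ge0$ on $\partial\Omega$. Since the rotationally symmetric solution on a geodesic ball centred at $o$ has $W\equiv g$, the target is to show $W\equiv g$ in $\Omega$ and that $\partial\Omega$ is a geodesic sphere.

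Step 1: a Poho\v{z}aev-type identity. Write the equation as $\binom{n}{l}\sigma_k(W)=\binom{n}{k}\sigma_l(W)$ and set $P^{ij}:=\binom{n}{l}(T_{k-1})^{ij}-\binom{n}{k}(T_{l-1})^{ij}$, which is divergence-free by (a) and satisfies, on the solution, $P^{ij}W_{ij}=k\binom{n}{l}\sigma_k(W)-l\binom{n}{k}\sigma_l(W)=(k-l)\binom{n}{k}\sigma_l(W)$ (with the convention $\sigma_0\equiv1$ when $l=0$). I would then integrate the divergence of an auxiliary field built from $P^{ij}$, $\nabla u$ and $\Phi$ (roughly $P^{ij}\Phi_i u_j$ plus a correction involving $u\,\Phi_i$), use $\nabla X=Vg$, $\divg P=0$ and $\nabla^2\Phi+K\Phi g=g$ to rewrite the interior term, and integrate by parts once more against the boundary data $u=Kc_1$, $u_\nu=c_2$. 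Differentiating $u\equiv Kc_1$ twice along $\partial\Omega$ shows the tangential block of $W$ there equals $-c_2\,\mathrm{II}$ up to an explicit multiple of $g$, so the outcome is an identity of the schematic shape
\[
(k-l)\binom{n}{k}\int_\Omega \sigma_l(W)\,\Psi\;=\;\int_{\partial\Omega}\mathcal{B}\big(c_1,c_2,\langle X,\nu\rangle,\sigma_j(\mathrm{II})\big),
\]
with $\Psi\ge0$ a weight coming from $V$ and $\Phi$, and $\mathcal{B}$ an explicit boundary expression. A companion identity, obtained the same way but contracting against $W$ itself (or with a second choice of field), gives $\int_\Omega\sigma_l(W)\,\Psi$ a second boundary expression not involving the $\sigma_j(\mathrm{II})$.

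Step 2: Minkowski, Newton--MacLaurin, rigidity. I would use the space form Minkowski formula $\int_{\partial\Omega}\big(V\,\sigma_{j-1}(\mathrm{II})-\langle X,\nu\rangle\,\sigma_j(\mathrm{II})\big)=0$ to convert the curvature integrals, then the Newton--MacLaurin inequalities on $\partial\Omega$ together with $\langle X,\nu\rangle\ge0$ (this is where star-shapedness is essential) to bound $\mathcal{B}$, and on the interior side the relation $\sigma_k(W)=\tfrac{\binom{n}{k}}{\binom{n}{l}}\sigma_l(W)$ with Newton--MacLaurin again. Combining with the companion identity closes a chain $A\le B\le\cdots\le A$, forcing each inequality to be an equality. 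The Newton--MacLaurin equality case on $\partial\Omega$ makes $\mathrm{II}$ a pointwise multiple of the identity, i.e.\ $\partial\Omega$ totally umbilic, hence a geodesic sphere $\partial B_\rho(p)$ in $M^n(K)$; the interior equality gives $W\equiv g$, i.e.\ $\nabla^2u+Kug=g$. Finally, knowing $\Omega=B_\rho(p)$, the radial solution of the quotient equation that equals $Kc_1$ on $\partial B_\rho(p)$ and is smooth at $p$ is unique (reduce to an ODE with $u'(0)=0$), so $u$ is radially symmetric about $p$; this proves the theorem.

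The main obstacle is Step~1: pinning down the exact Poho\v{z}aev identity. The auxiliary field must be chosen so that the curvature terms produced when commuting covariant derivatives assemble precisely into the $Kug$ correction (this is where ``space form'', rather than a general manifold, enters essentially), so that the interior integrand is visibly signed after invoking the equation, and so that the boundary integrand depends only on the prescribed constants and the intrinsic/extrinsic geometry of $\partial\Omega$. A structural tension to manage is that the linearised operator $\partial(\sigma_k/\sigma_l)/\partial W_{ij}$ is elliptic but not divergence-free, whereas the divergence-free combination $P^{ij}$ need not be positive definite; one therefore has to run the divergence identity and the ellipticity arguments on different quantities and reconcile them using the equation. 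Secondary points are the rigidity step when $K>0$ -- a totally umbilic $\partial\Omega$ must bound a genuine geodesic ball, which is why the statement is confined to the hemisphere $S^n_+(1/\sqrt{K})$ -- and verifying that admissibility $W\in\Gamma_k$ is not lost along the way.
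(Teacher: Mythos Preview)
Your approach is not the paper's, and the step you flag as ``the main obstacle'' is in fact a genuine gap for the quotient equation.

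The paper proves Theorem~\ref{Th:QuoHess} by a pure maximum-principle argument, with no integral identities at all. It introduces two auxiliary functions,
\[
P=|\nabla u|^2+Ku^2-2u,\qquad \widetilde{P}=-g(\nabla u,\nabla\Phi)+uV+\Phi,
\]
and shows (Lemmas~\ref{Th:P} and \ref{Th:tildeP}) that both are subsolutions of the linearised operator $F^{ij}\nabla^2_{ij}$, where $F^{ij}=\partial(\sigma_k/\sigma_l)/\partial b_{ij}$ is elliptic on $\Gamma_k$. The key inequalities $(n-k+1)\sigma_{k-1}/\sigma_k\geq k$, $(l+1)\sigma_{l+1}/\sigma_l\geq n-l$, etc.\ (Lemma~\ref{Th:IneqLem}), valid precisely because $\sigma_k/\sigma_l=\binom{n}{k}/\binom{n}{l}$, make $F^{ij}P_{ij}\geq0$ and $F^{ij}\widetilde{P}_{ij}\geq0$. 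The strong maximum principle then gives the dichotomy: either $P$ is constant (forcing $W=g$), or $P<P|_{\partial\Omega}$, whence Hopf gives $u_{\nu\nu}+K^2c_1-1>0$ on $\partial\Omega$. In the second case one computes $\widetilde{P}_\nu=-\Phi_\nu(u_{\nu\nu}+K^2c_1-1)$; star-shapedness means $\Phi_\nu>0$, so $\widetilde{P}_\nu<0$ everywhere on $\partial\Omega$, which by Hopf forces $\widetilde{P}$ constant, again yielding $W=g$. No Minkowski formula, no boundary curvature integrals.

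Your Poho\v{z}aev/Minkowski scheme is essentially what the paper does for Theorem~\ref{Th:Hess}, i.e.\ the case $l=0$ \emph{without} star-shapedness (Section~\ref{Sec:R-Pidentity}); there the divergence-free tensor is simply $\sigma_k^{ij}$ and the interior weight $\sigma_{k-1}>0$ has an unambiguous sign. For genuine quotients $l\geq1$ the situation you describe---the elliptic tensor $F^{ij}$ is not divergence-free, the divergence-free tensor $P^{ij}=\binom{n}{l}T_{k-1}-\binom{n}{k}T_{l-1}$ is not positive definite---is exactly why the integral route stalls: after integration by parts the interior integrand carries the trace-type combination $(n-k+1)\sigma_{k-1}\binom{n}{l}-(n-l+1)\sigma_{l-1}\binom{n}{k}$, whose sign you can control via Lemma~\ref{Th:IneqLem}, but the companion identity you need to close the chain would require controlling $P^{ij}b_{im}b_{mj}$-type terms, and there the lack of definiteness of $P^{ij}$ bites. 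The paper sidesteps this entirely by working with $F^{ij}$ and the maximum principle, where ellipticity is all that is needed and divergence-freeness is irrelevant.
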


\begin{remark} For the case $K=0$, the condition $Ku\geq 0$ in $\Omega$ holds automatically and the constant Dirichlet boundary condition reduces to the zero one $u\vert_{\partial\Omega}=0$. Moreover, the condition may be replaced by $u\vert_{\partial\Omega}=c$ for any constant $c\in\mathbb{R}$. In fact,
the equation now becomes 
\[ \dfrac{\sigma_{k}(\nabla^{2}u)}{\sigma_{l}(\nabla^{2}u)}=\dfrac{\binom{n}{k}}{\binom{n}{l}}, \] 
thus $\bar{u}=u-c$ satisfies the same equation 
and the zero boundary condition. 

In the two other cases, the conditions of the problem (\ref{Eq:hessian}) actually imply a restriction on the constant $c_{1}$.
\item[(1)]
For the case $K>0$, the conditions $u_{\nu}\vert_{\partial\Omega}=c_{2}>0$ and $Ku\geq 0$ in $\Omega$ imply $u\vert_{\partial\Omega}=Kc_{1}>0$. Thus, $c_{1}>0$.
 \item[(2)]
For the case $K<0$, the condition $Ku\geq 0$ in $\Omega$ implies $Ku\vert_{\partial\Omega}\geq 0$ by the continuity of $u$. Substituting the boundary condition $u\vert_{\partial\Omega}=Kc_{1}$, we get $K^{2}c_{1}\geq 0$ and therefore $c_{1}\geq 0$.
\end{remark}

\begin{remark}\label{model}
The radially symmetric solution $u$ in Theorem \ref{Th:QuoHess} are given  explicitly as follows:
\item[(i)] For $K=0$, \[u=\dfrac{\vert x\vert^{2}-c_{2}^{2}}{2};\]
\item[(ii)] For $K>0$, \[u=\dfrac{1}{K}-\dfrac{c_{2}}{\sqrt{K}\sin(\sqrt{K}R) }\cos(\sqrt{K}r),\] where $R=\dfrac{1}{\sqrt{K}}\arctan\left(\dfrac{c_{2}\sqrt{K}}{1-K^{2}c_{1}}\right)$;
\item[(iii)] For $K<0$, \[u=\dfrac{1}{K}+\dfrac{c_{2}}{\sqrt{-K}\sinh(\sqrt{-K} R)}\cosh(\sqrt{-K}r),\] where $R=\dfrac{1}{\sqrt{-K}}\text{arctanh} \left(\dfrac{c_{2}\sqrt{-K}}{1-K^{2}c_{1}}\right)$.

Note that inequality $K^{2}c_{1}< 1$ is needed in (ii) and (iii), but not assumed in Theorem \ref{Th:QuoHess} (compared with Theorem \ref{Th:Hess}). Actually, the inequality will be confirmed in the proof of Theorem $\ref{Th:QuoHess}$.
\end{remark}

To prove Theorem \ref{Th:QuoHess}, we use two auxiliary functions $P$ and $\widetilde{P}$ (see Section \ref{Sec:P}). The latter is a generalization of Qiu-Xia's auxiliary function $\widetilde{P}$ in \cite{Qiu-Xia} for space form cases. By the maximum principle and the Hopf lemma, we see that either $P$ or $\widetilde{P}$ is constant, which leads to our conclusion. Notice that the condition $Ku\geq 0$ in $\Omega$ enables us to establish the $k$-admissible property of the solution $u$ (see Lemma \ref{Th:binGammak}) and apply the maximum principle to $P$-function (see Lemma \ref{Th:P}).  
For the case $K>0$, the condition that $\Omega$ lies in a hemisphere  $S^n_+(\frac{1}{\sqrt{K}})$  guarantees the positivity of the function $V$ (see Section \ref{Space forms}) 
which is important in Lemma \ref{Th:tildeP}, while the condition is not necessary for \cite[Theorem 1.2]{Qiu-Xia}. In fact, the corresponding term in \cite[Lemma 2.2]{Qiu-Xia} vanishes due to the linearity of the Laplace operator.

We then turn to a study of the $k$-Hessian equations with boundary conditions $u\vert_{\partial\Omega}=Kc_{1}$ and $u_{\nu}\vert_{\partial\Omega}=c_{2}$ in space forms.
Following the original scheme of Weinberger's argument, 
we establish a Rellich-Poho\v{z}aev type identity. Using this identity, we obtain our second result.

\begin{theorem}\label{Th:Hess}
Suppose that $u \in C^{3}(\Omega)\cap C^{2}(\overline{\Omega})$ is a solution to the problem
\begin{equation}\label{Eq:}
\left\{  
\begin{aligned} 
&\sigma_{k}(\nabla^{2}u+Kug)=\binom{n}{k} &\text{in }&~\Omega,\\  
&u=Kc_{1}&\text{on }&~\partial \Omega,\\
&u_{\nu}=c_{2}&\text{on }&~\partial \Omega,\\
&K u\geq 0 &\text{in }&~\Omega,
\end{aligned}
\right.
\end{equation} 
where $c_{2}$ is a positive constant. In addition, the constant $c_{1}$ satisfies the condition $K^{2}c_{1}\leq 1$ when $2\leq k\leq n$, while $c_{1}$ can be arbitrary when $k=1$. Then $u$ is radially symmetric and $\Omega$ is a geodesic ball in $M^n(K)$.
\end{theorem}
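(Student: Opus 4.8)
The plan is to carry over the Poho\v{z}aev--Minkowski--Newton-MacLaurin scheme of Brandolini--Nitsch--Salani--Trombetti \cite{BNST} to space forms. The relevant objects are those of Section~\ref{Sec:prelim}: the function $V$ solving $\nabla^{2}V+KVg=0$ (with $V\equiv 1$ for $K=0$, and, when $K>0$, $V>0$ on $\overline{\Omega}$ by the hemisphere hypothesis), the conformal vector field $X$ with $\nabla_{i}X_{j}=Vg_{ij}$ (hence $\operatorname{div}X=nV$ and $X=-\tfrac{1}{K}\nabla V$ for $K\neq 0$), and the modified Hessian $W:=\nabla^{2}u+Kug$ with its $(k-1)$-st Newton tensor $T:=T_{k-1}(W)$. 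For an admissible solution $T$ is positive semidefinite, and since $W$ is a Codazzi tensor in a space form one has $\operatorname{div}T=0$; I shall also use $k\sigma_{k}(W)=T^{ij}W_{ij}$, the identity $\sum_{i}T^{ij}W_{il}=\sigma_{k}(W)\delta^{j}{}_{l}-T_{k}(W)^{j}{}_{l}$, and $\operatorname{tr}_{g}T=(n-k+1)\sigma_{k-1}(W)$.

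\emph{Step 1: the Rellich--Poho\v{z}aev identity.} I would test $\sigma_{k}(W)=\binom{n}{k}$ against $1$, against $u$, and against $\langle X,\nabla u\rangle$, integrate over $\Omega$, and integrate by parts, using $\operatorname{div}T=0$, $\nabla X=Vg$, $\nabla V=-KX$ and $\nabla_{i}\langle X,\nabla u\rangle=Vu_{i}+W_{ij}X^{j}-KuX_{i}$ to move derivatives and collapse the curvature terms via $\nabla^{2}V+KVg=0$ and the equation itself. Since $u\equiv Kc_{1}$ on $\partial\Omega$, there $\nabla u=c_{2}\nu$ and $\nabla_{\alpha}\nabla_{\nu}u=0$, so $W$ is block-diagonal on $\partial\Omega$ with tangential eigenvalues $\lambda_{\alpha}=c_{2}\kappa_{\alpha}+K^{2}c_{1}$ ($\kappa_{1},\dots,\kappa_{n-1}$ the principal curvatures of $\partial\Omega$), whence $T(\nu,\nu)|_{\partial\Omega}=\sigma_{k-1}(\lambda_{1},\dots,\lambda_{n-1})$ and the equation on $\partial\Omega$ pins $u_{\nu\nu}$ down in terms of the $\sigma_{j}(\kappa)$. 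The tests against $1$ and $u$ give two auxiliary identities relating $\int_{\partial\Omega}\sigma_{k-1}(\lambda)\,d\sigma$, $\int_{\Omega}(Kc_{1}-u)V$, $\int_{\Omega}T(\nabla u,\nabla u)$ (and, for $K\neq 0$, lower-order bulk terms $\int_{\Omega}u\,\sigma_{j}(W)$, themselves handled by the same integration by parts applied recursively). The test against $\langle X,\nabla u\rangle$, after all curvature terms are reorganized, is the Rellich--Poho\v{z}aev identity, of the schematic form
\[
\int_{\partial\Omega}\bigl(A\,V+B\,\langle X,\nu\rangle\bigr)\,\sigma_{k-1}(\lambda)\,d\sigma
=\int_{\Omega}\bigl(C\,T(\nabla u,\nabla u)+D\,(Kc_{1}-u)\,V\bigr),
\]
with $A,B,C,D$ explicit constants in $n,k,K,c_{1},c_{2}$.

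\emph{Step 2: Minkowski, Newton--MacLaurin, and the equality case.} Next, I would use the Hsiung--Minkowski formulas for $\partial\Omega\subset M^{n}(K)$, $\int_{\partial\Omega}\bigl(V\sigma_{j-1}(\kappa)-\langle X,\nu\rangle\sigma_{j}(\kappa)\bigr)d\sigma=0$, together with $\lambda_{\alpha}=c_{2}\kappa_{\alpha}+K^{2}c_{1}$, to rewrite $\int_{\partial\Omega}\langle X,\nu\rangle\sigma_{k-1}(\lambda)$ purely in terms of the unweighted integrals $\int_{\partial\Omega}V\sigma_{j}(\kappa)$, $0\le j\le k-1$; it is this step, rather than any sign assumption on $\langle X,\nu\rangle$, that dispenses with star-shapedness. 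Substituting back reduces all the identities of Step~1 to a single scalar relation among the $\int_{\partial\Omega}V\sigma_{j}(\kappa)$ and $\int_{\Omega}(Kc_{1}-u)V\ge 0$. Applying the Newton--MacLaurin inequalities to the $\kappa_{\alpha}$ and to the $\lambda_{\alpha}$ (which lie in the appropriate G{\aa}rding cone because $Ku\ge 0$ and $K^{2}c_{1}\le 1$), together with $T(\nabla u,\nabla u)\ge 0$, one sees that the scalar relation can hold only if equality holds pointwise in every Newton--MacLaurin inequality invoked. That forces $\kappa_{1}=\dots=\kappa_{n-1}$ everywhere, so $\partial\Omega$ is totally umbilic, hence a geodesic sphere and $\Omega$ a geodesic ball; on a geodesic ball the Dirichlet problem $\sigma_{k}(W)=\binom{n}{k}$, $u|_{\partial\Omega}=Kc_{1}$ has a unique admissible solution, necessarily radial by rotational uniqueness, so $u$ is the explicit radial function recorded above.

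\emph{Main obstacle.} The delicate part is Step~1. Unlike in $\mathbb{R}^{n}$, the term $Kug$ in $W$ and the non-parallelism of $X$ (only $\nabla X=Vg$, with $V$ non-constant) throw off a number of curvature terms, and the nonzero Dirichlet datum shifts the tangential eigenvalues by $K^{2}c_{1}$; the heart of the matter is to verify that everything recombines into the identity above with a right-hand side of controlled sign. Exactly here, and in keeping the eigenvalues $\lambda_{\alpha}$ and the bulk integrands of one sign, do the hypotheses $Ku\ge 0$, $K^{2}c_{1}\le 1$ (and $V>0$, i.e.\ the hemisphere assumption, when $K>0$) enter. A secondary point, used both in the integrations by parts and in the Newton--MacLaurin step, is to confirm that $W$ is a Codazzi tensor (so $\operatorname{div}T_{k-1}(W)=0$) and that the tangential restriction of $W$ on $\partial\Omega$ is $(k-1)$-admissible.
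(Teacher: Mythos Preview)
Your strategy diverges substantially from the paper's. The paper does \emph{not} carry the full BNST boundary machinery (Hsiung--Minkowski formulas, Newton--MacLaurin on the principal curvatures of $\partial\Omega$) to space forms; instead it couples a Weinberger-type $P$-function with a \emph{purely bulk} Poho\v{z}aev identity. Concretely: $P=|\nabla u|^{2}+Ku^{2}-2u$ satisfies $\sigma_{k}^{ij}P_{ij}\ge 0$ (Lemma~\ref{Th:P}), so by the strong maximum principle either $P\equiv P|_{\partial\Omega}$ or $P<P|_{\partial\Omega}$ in $\Omega$. All boundary integrals arising in the Poho\v{z}aev computation are converted back to volume integrals via the divergence theorem (Lemma~\ref{Th:bdryterm}), yielding (Lemma~\ref{Th:intP})
\[
\binom{n}{k-1}\int_{\Omega}(u-Kc_{1})V\,\mathrm{d}\mu=\frac{1}{2}\int_{\Omega}\bigl(|\nabla u|^{2}+Ku^{2}-K^{3}c_{1}^{2}-c_{2}^{2}\bigr)\,\sigma_{k-1}V\,\mathrm{d}\mu.
\]
If $P$ were not constant, the strict inequality $|\nabla u|^{2}+Ku^{2}-K^{3}c_{1}^{2}-c_{2}^{2}<2(u-Kc_{1})$ multiplied by $\sigma_{k-1}V>0$ and integrated, together with the single bulk Newton--MacLaurin bound $\sigma_{k-1}\ge\binom{n}{k-1}$ and the sign $u-Kc_{1}<0$, would contradict the identity. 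Hence $P$ is constant, and the equality case in the proof of Lemma~\ref{Th:P} forces $\nabla^{2}u+Kug=g$ pointwise, from which radial symmetry follows by an ODE argument. So the paper replaces your delicate Step~1 recombination and the entire Minkowski/boundary-Newton--MacLaurin chain by a $P$-function dichotomy and one inequality in the interior; no boundary curvature ever appears.

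Your plan may well go through, but two gaps stand out. First, you invoke $\int_{\Omega}(Kc_{1}-u)V\ge 0$ without argument; for $K\neq 0$ this does \emph{not} follow from the maximum principle applied directly to $u$ (the zero-order coefficient has the wrong sign), and in the paper it is precisely here---not in keeping the $\lambda_{\alpha}$ positive---that the hypothesis $K^{2}c_{1}\le 1$ enters, via the auxiliary $w=(u-Kc_{1})/V$, which satisfies $\Delta w+\tfrac{2}{V}\langle\nabla V,\nabla w\rangle\ge 0$. Second, your Step~1 identity is left schematic and Step~2 only asserted: the shift $\lambda_{\alpha}=c_{2}\kappa_{\alpha}+K^{2}c_{1}$, the $V$-weights in the Hsiung--Minkowski formulas, and the recursive lower-order bulk terms you allude to for $K\neq 0$ make the bookkeeping much heavier than in \cite{BNST}, and until the constants are actually written down one cannot see that the final scalar relation has the sign structure needed to force equality in Newton--MacLaurin.
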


\begin{remark}
Note that the assumption $K^{2}c_{1}\leq 1$ in the above theorem replaces the star-shapedness of  $\Omega$  in Theorem \ref{Th:QuoHess}. For the case $K=0$, conditions $K^{2}c_{1}\leq 1$ and $Ku \geq 0$ hold automatically, so the theorem reduces to the results in \cite{BNST}. For the case $K<0$ and $c_{1}=0$, inequality $K^{2}c_{1}\leq 1$ also holds automatically and $Ku\geq 0 \text{~in }\Omega$ is implied by the maximum principle, so the result reduces to Theorem 1.2 in \cite{Gao-Jia-Yan}. For the special case $k=1$, as in Section \ref{Sec:ProofkHess}, the condition $K^{2}c_{1}\leq 1$ is not necessary.

\end{remark}

As far as we know, the overdetermined problems with constant (possibly nonzero) Dirichlet boundary conditions have not received sufficient attention yet.
We propose the constant boundary condition $u=Kc_{1}$ on $\partial \Omega$, which is compatible with the condition $Ku\geq 0$ in $\Omega$. 
A nonzero constant $c_{1}$ makes many differences. The model solutions (see Remark \ref{model}), values of $P$-function and $\widetilde{P}$-function, the Rellich-Poho\v{z}aev type identity and many arguments in proofs are all related to constant $c_{1}$. See the later sections for details.

The paper is organized as follows. In Section \ref{Sec:prelim}, we recall some notations and facts of space forms, elementary symmetric functions and Hessian operators. In Section \ref{Sec:convexity}, we show the $k$-convexity of $u$ to problem (\ref{Eq:hessian}). In Section \ref{Sec:P}, we consider $P$-function and $\widetilde{P}$-function to the Hessian quotient equations. Then we prove Theorem \ref{Th:QuoHess} in Section \ref{Sec:ProofQuoHess}. In Section \ref{Sec:R-Pidentity}, we establish a Rellich-Poho\v{z}aev type identity for the $k$-Hessian equations with constant boundary conditions in space forms. In the last section, we prove Theorem \ref{Th:Hess}.

\section{Notation and preliminaries}
\label{Sec:prelim}

\subsection{Space forms}\label{Space forms}

Let  $M^{n}(K)$ be a complete, simply connected manifold with constant sectional curvature $K$. 
It is well-known that 
$M^{n}(K)$ is isometric to the Euclidean space if $K=0$, the hyperbolic space if $K<0$ and the sphere if $K>0$. These models can be described as the warped product manifold $M^{n}(K)=[0,\bar{r})\times \mathbb{S}^{n-1}$ with metric \[ g=dr\otimes dr+f(r)^{2}g_{\mathbb{S}^{n-1}}, \] where $r$ is the geodesic distance from any point $x$ to a given point $x_{0}$ in $M^{n}(K)$ and $g_{\mathbb{S}^{n-1}}$ is the metric of the $(n-1)$-dimensional standard unit sphere. The warping function $f(r)$ is given by
\begin{equation*}
f(r)=
\begin{cases}
r \qquad &\text{for }K=0,\\
\dfrac{\sinh(\sqrt{-K}r)}{\sqrt{-K}}\qquad &\text{for }K<0,\\
\dfrac{\sin(\sqrt{K}r)}{\sqrt{K}}  \qquad &\text{for }K>0.
\end{cases}
\end{equation*}
In addition, we require that $\bar{r}=\infty$ for $K\leq 0$ and $\bar{r}= \frac{\pi}{2\sqrt{K}}$ for $K>0$. 
Hence our restriction on $\bar{r}$ implies we only focus on a hemisphere $S^n_+(\frac{1}{\sqrt{K}})$ with radius $\frac{1}{\sqrt{K}}$ in the latter case.

We know that space forms $M^n(K)$ endow a natural conformal Killing vector field, $f(r)\frac{\partial}{\partial r}$, in terms of the above warped product model. It is the gradient of the potential function $\Phi$ defined by
\begin{equation*}
\Phi(r):=
\begin{cases}
\dfrac{1}{2}r^{2}, \qquad &\text{for } K=0,\\
\dfrac{\cosh(\sqrt{-K}r)}{-K}, \qquad &\text{for }K<0,\\
\dfrac{-\cos(\sqrt{K}r)}{K},  \qquad &\text{for }K>0,
\end{cases}
\end{equation*}

Denote 
\begin{equation*}
V(r):= f^{\prime}(r)=
\begin{cases}
1, \qquad &\text{for } K=0,\\
\cosh(\sqrt{-K}r), \qquad &\text{for } K<0,\\
\cos(\sqrt{K}r), \qquad &\text{for } K>0.
\end{cases}
\end{equation*}
Thus for any vector field $\xi$ on $M^{n}(K)$,  
\[ \nabla_{\xi}\left( f(r)\frac{\partial}{\partial r} \right)=V\xi. \]
Then by direct computation we have the following nice properties.
\begin{proposition}\label{Th:PhiV}
$-K\nabla \Phi=\nabla V$, $\nabla^{2} \Phi =Vg$ and $\nabla^{2} V=-KVg$.
\end{proposition}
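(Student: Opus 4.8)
The plan is to reduce all three identities to two elementary one-variable facts—namely $\Phi'(r)=f(r)$ and $V(r)=f'(r)$ with $f''=-Kf$—together with the structural identity $\nabla_{\xi}\!\left(f(r)\frac{\partial}{\partial r}\right)=V\xi$ recorded just above the statement.

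First I would compute $\nabla\Phi$. Since $r$ is the geodesic distance function from $x_{0}$, the Gauss lemma gives $\nabla r=\frac{\partial}{\partial r}$ with $\lvert\nabla r\rvert=1$, hence $\nabla\Phi=\Phi'(r)\frac{\partial}{\partial r}$. Differentiating the three explicit formulas for $\Phi$ one checks, in each of the cases $K=0$, $K<0$, $K>0$, that $\Phi'(r)=f(r)$; thus $\nabla\Phi=f(r)\frac{\partial}{\partial r}$. In the same way $V=f'(r)$ by inspection, so $\nabla V=f''(r)\frac{\partial}{\partial r}$, and a second case-by-case check gives $f''(r)=-Kf(r)$ (this is the Jacobi equation of the space form). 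Combining the two, $\nabla V=f''(r)\frac{\partial}{\partial r}=-Kf(r)\frac{\partial}{\partial r}=-K\nabla\Phi$, which is the first identity.

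For the second identity I would use $\nabla\Phi=f(r)\frac{\partial}{\partial r}$ together with the recorded fact $\nabla_{\xi}\!\left(f(r)\frac{\partial}{\partial r}\right)=V\xi$: for arbitrary vector fields $\xi,\eta$,
\[
\nabla^{2}\Phi(\xi,\eta)=g\!\left(\nabla_{\xi}\nabla\Phi,\eta\right)=g\!\left(\nabla_{\xi}\!\Big(f(r)\tfrac{\partial}{\partial r}\Big),\eta\right)=g(V\xi,\eta)=Vg(\xi,\eta),
\]
so $\nabla^{2}\Phi=Vg$. Finally, applying $\nabla$ once more to the first identity, $\nabla^{2}V=\nabla(\nabla V)=-K\,\nabla(\nabla\Phi)=-K\,\nabla^{2}\Phi=-KVg$, which is the third identity. (If one prefers not to invoke $\nabla_{\xi}(f(r)\frac{\partial}{\partial r})=V\xi$, one may instead use the standard Hessian formula $\nabla^{2}\phi=\phi''\,dr\otimes dr+\phi'\frac{f'}{f}\,f^{2}g_{\mathbb{S}^{n-1}}$ for a radial function $\phi$: with $\Phi'=f$, $\Phi''=f'$ it gives $\nabla^{2}\Phi=f'\big(dr\otimes dr+f^{2}g_{\mathbb{S}^{n-1}}\big)=Vg$, and with $V'=f''=-Kf$, $V''=-Kf'$ it gives $\nabla^{2}V=-Kf'\big(dr\otimes dr+f^{2}g_{\mathbb{S}^{n-1}}\big)=-KVg$.)

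There is no genuine obstacle here: the whole content is the two scalar identities $\Phi'=f$ and $f''=-Kf$, which are immediate from the explicit formulas, so the only thing requiring attention is bookkeeping the three sign conventions $K=0,<0,>0$ when differentiating—and noting that the prescribed range $r\in[0,\bar r)$ keeps $f>0$, so that $\frac{\partial}{\partial r}$ is a well-defined unit field and the radial Hessian formula is valid on all of $M^{n}(K)$.
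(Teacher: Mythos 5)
Your proof is correct and follows essentially the same route the paper intends: the paper records $\nabla\Phi=f(r)\frac{\partial}{\partial r}$ and $\nabla_{\xi}\bigl(f(r)\frac{\partial}{\partial r}\bigr)=V\xi$ and then states the proposition ``by direct calculations,'' which is exactly your reduction to $\Phi'=f$, $V=f'$, $f''=-Kf$ together with the Hessian computation. The only cosmetic point is at the center $r=0$ (where $f=0$ and $\frac{\partial}{\partial r}$ is undefined): the identities there follow by continuity, since $\Phi$ and $V$ are smooth radial functions.
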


Let $\Omega$ be a bounded, open, connected domain in $M^{n}(K)$. 
With the assumption $\bar{r}= \frac{\pi}{2\sqrt{K}}$ in the case $K>0$ (i.e., $\Omega\subset S^n_+(\frac{1}{\sqrt{K}})$ with radius $\frac{1}{\sqrt{K}}$), we know $V(r)>0$ in $\Omega$ for any constant $K$.

Let $\nu$ denote the outward unit normal of $\partial\Omega$. We say $\Omega$ is \emph{star-shaped}, if $g(\nu,\frac{\partial}{\partial r})(y)>0$ for all $y\in \partial\Omega$.

\subsection{Elementary symmetric functions}\label{Sec:symmetricfunction}

We recall some properties of elementary symmetric polynomials which will be used later.

  For  $k\in \{1,\ldots, n\}$, the $k$-th elementary symmetric function of $\lambda=(\lambda_1,\cdots,\lambda_n)\in \mathbb{R}^n$  is defined by 
\[ 
\sigma_{k}(\lambda):=\sum_{1\leq i_{1}<\cdots<i_{k}\leq n}\lambda_{i_{1}}\cdots \lambda_{i_{k}}. 
\] 

Given a real symmetric $n\times n$ matrix $A=(a_{ij})$ with eigenvalues $\lambda(A)$, 
we can define the $k$-th elementary symmetric polynomial by
$\sigma_{k}(A):=\sigma_{k}(\lambda(A))$. Thus  
\[ \sigma_{k}(A)=\frac{1}{k!}
\sum_{\substack{1\leq i_{1},\ldots, i_{k} \leq n \\ 
1\leq j_{1}, \ldots, j_{k} \leq n}}
\delta^{j_{1}\cdots j_{k}}_{i_{1}\cdots i_{k}}a_{i_{1}j_{1}}\cdots a_{i_{k}j_{k}}, \] 
where $\delta^{j_{1}\cdots j_{k}}_{i_{1}\cdots i_{k}}$ is the generalized Kronecker symbol defined by
\begin{equation*}
\delta^{j_{1}\cdots j_{k}}_{i_{1}\cdots i_{k}}=\begin{cases}
1,\quad &\text{if }(i_{1}\cdots i_{k})\text{ is an even permutation of }(j_{1}\cdots j_{k}), \\
-1,\quad &\text{if }(i_{1}\cdots i_{k})\text{ is an odd permutation of }(j_{1}\cdots j_{k}), \\
0,\quad &\text{otherwise}.
\end{cases}
\end{equation*}
We also set $\sigma_{0}(A)=1$ and $\sigma_{k}(A)=0$ for $k<0$ and $k>n$.

Denote $\sigma_{k}^{ij}(A):=\frac{\partial\sigma_{k}(A)}{\partial a_{ij}}$. The following properties (see Proposition 1.2 in \cite{Reilly73} for reference) are useful in later calculations.
\begin{proposition}\label{Th:BasicSigmak}
\begin{flalign*}
&(\romannumeral1)\sum_{i,j=1}^{n}\sigma^{ij}_{k}(A)a_{ij}=k\sigma_{k}(A),\\
&(\romannumeral2)\sum_{i=1}^{n}\sigma_{k}^{ii}(A)=(n-k+1)\sigma_{k-1}(A),\\
&(\romannumeral3)~\sigma_{k}^{ij}(A)=\sigma_{k-1}(A)\delta_{ij}-\sum_{l=1}^{n}\sigma_{k-1}^{il}(A)a_{jl},\\
&(\romannumeral4)\sum_{i,j,l=1}^{n}\sigma^{il}_{k}(A)a_{jl}a_{ij}=\sigma_{1}(A)\sigma_{k}(A)-(k+1)\sigma_{k+1}(A).&
\end{flalign*}
\end{proposition}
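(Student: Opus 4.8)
\textbf{Proof proposal for Proposition \ref{Th:BasicSigmak}.}
The plan is to prove all four identities starting from the divided-difference (polarized determinant) formula
\[
\sigma_{k}(A)=\frac{1}{k!}\sum_{\substack{1\leq i_{1},\ldots,i_{k}\leq n\\1\leq j_{1},\ldots,j_{k}\leq n}}\delta^{j_{1}\cdots j_{k}}_{i_{1}\cdots i_{k}}\,a_{i_{1}j_{1}}\cdots a_{i_{k}j_{k}},
\]
together with the basic expansion of the generalized Kronecker symbol along its first index,
\[
\delta^{j_{1}j_{2}\cdots j_{k}}_{i_{1}i_{2}\cdots i_{k}}=\sum_{m=1}^{k}(-1)^{m-1}\delta_{i_{1}j_{m}}\,\delta^{j_{1}\cdots\widehat{j_{m}}\cdots j_{k}}_{i_{2}\cdots i_{k}},
\]
which is the combinatorial heart of everything below. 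I will also use that, by symmetry of $A$ and of the generalized Kronecker symbol in its upper and lower indices separately, all $k$ summation slots play the same role, so it suffices to differentiate/contract one slot and multiply by $k$ (dividing the $k!$ accordingly).

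\emph{Step 1 (identity (iii)).} Differentiating the polarized formula for $\sigma_{k+1}$ — or rather, the cleanest route: write
\[
\sigma_{k}^{ij}(A)=\frac{\partial \sigma_k(A)}{\partial a_{ij}}=\frac{1}{(k-1)!}\sum \delta^{j\,j_{2}\cdots j_{k}}_{i\,i_{2}\cdots i_{k}}a_{i_{2}j_{2}}\cdots a_{i_{k}j_{k}},
\]
where we used the all-slots-equal symmetry to pull out one factor. Now apply the first-index expansion of $\delta^{j\,j_{2}\cdots j_{k}}_{i\,i_{2}\cdots i_{k}}$: the $m=1$ term gives $\delta_{ij}\,\delta^{j_{2}\cdots j_{k}}_{i_{2}\cdots i_{k}}$, contributing $\sigma_{k-1}(A)\delta_{ij}$; for $m\geq2$ the term is $(-1)^{m-1}\delta_{i j_{m}}\delta^{j\,j_{2}\cdots\widehat{j_{m}}\cdots j_{k}}_{i_{2}\cdots i_{k}}$, and summing over the $(k-1)$ equivalent slots and relabelling identifies it with $-\sum_{l}\sigma_{k-1}^{il}(A)a_{jl}$. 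This yields (iii).

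\emph{Step 2 (the remaining identities as contractions of (iii)).} Identity (i) is Euler's relation: $\sigma_k$ is homogeneous of degree $k$ in the entries $a_{ij}$, so $\sum_{i,j}\sigma_k^{ij}(A)a_{ij}=k\sigma_k(A)$ immediately. Identity (ii): take the trace in (iii), i.e.\ set $j=i$ and sum:
\[
\sum_i \sigma_k^{ii}(A)=n\,\sigma_{k-1}(A)-\sum_{i,l}\sigma_{k-1}^{il}(A)a_{il}=n\sigma_{k-1}(A)-(k-1)\sigma_{k-1}(A)=(n-k+1)\sigma_{k-1}(A),
\]
using (i) with $k$ replaced by $k-1$. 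Identity (iv): contract (iii) against $a_{jl}a_{ij}$ — more precisely, start from $\sum_{i,j,l}\sigma_k^{il}(A)a_{jl}a_{ij}$ and instead use (iii) in the form with the right index to get $\sum_{i,l}\sigma_{k+1}^{il}a_{il}=\sum_{i,l}\big(\sigma_k(A)\delta_{il}-\sum_{p}\sigma_k^{ip}(A)a_{lp}\big)a_{il}$; applying (i) to the first piece gives $\sigma_1(A)\sigma_k(A)$ and to the second gives $(k+1)\sigma_{k+1}(A)$, rearranging to (iv). One must be slightly careful matching index names against the precise slot in which the derivative sits, but the symmetry of $A$ makes $\sigma_k^{ij}=\sigma_k^{ji}$, so this is harmless.

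The main obstacle — really the only nontrivial point — is Step 1: correctly bookkeeping the sign $(-1)^{m-1}$ and the index relabelling when expanding the generalized Kronecker symbol along its first index, and checking that the $m\geq 2$ terms collapse (after using the $(k-1)$-fold slot symmetry) into exactly $-\sum_l \sigma_{k-1}^{il}(A)a_{jl}$ rather than some multiple of it. Once (iii) is established cleanly, (i), (ii), (iv) are one-line contractions as above. As an alternative to the combinatorial expansion, one could instead prove (iii) by viewing $\sigma_k(\nabla^2 u+tI)$-type perturbations, i.e.\ expanding $\sigma_k(A+\epsilon B)$ to first order and reading off $\sigma_k^{ij}$ via the Newton identity $\sigma_k^{ij}=\sigma_{k-1}\delta_{ij}-\sigma_{k-1}^{il}a_{lj}$ as a recursion; I would present the Kronecker-symbol argument as the primary proof since it makes all four identities uniform.
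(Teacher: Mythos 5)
Your proof is correct. The paper itself does not prove Proposition \ref{Th:BasicSigmak} but simply cites Reilly's 1973 paper, and your derivation via the Laplace-type expansion of the generalized Kronecker symbol (establishing (iii) first and obtaining (i), (ii), (iv) by Euler's relation and contractions, with the transposition in (iii) justified by the symmetry of $A$) is essentially the standard argument found in that reference.
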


Now we collect some inequalities related to the elementary symmetric functions with references for the proofs. 

For $1\leq k\leq n$, we recall that the G{\aa}rding's cone is defined by
\[ \Gamma_{k}:=\{ \lambda\in \mathbb{R}^{n}\vert\sigma_{i}(\lambda)>0,\text{ for all }1\leq i\leq k \}. \] 
We say that a real symmetric matrix $A$ lies in  $\Gamma_{k}$ if its eigenvalues $\lambda(A)\in \Gamma_{k}$.

\begin{proposition}[Theorem 15.18, \cite{lieb}] \label{Th:positivedefinite}
For $A=(a_{ij})\in \Gamma_{k}$ and $0\leq l<k\leq n$, the matrix $\left(\frac{\partial}{\partial a_{ij}}\left(\frac{\sigma_{k}(A)}{\sigma_{l}(A)}\right)\right)$ is positive definite.
\end{proposition}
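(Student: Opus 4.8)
The plan is to reduce the matrix positivity to a one‑variable sign computation by orthogonal invariance, and then to close the argument with the Newton–MacLaurin inequalities.

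\textbf{Step 1: reduction to eigenvalues.} Both $\sigma_k$ and $\sigma_l$, hence $F:=\sigma_k/\sigma_l$, are invariant under $A\mapsto O^{T}AO$ for $O\in O(n)$, and positive definiteness of the symmetric matrix $\bigl(\tfrac{\partial F}{\partial a_{ij}}\bigr)$ is preserved under such conjugation. So it suffices to prove the claim at a diagonal $A=\mathrm{diag}(\lambda)$ with $\lambda=(\lambda_1,\dots,\lambda_n)\in\Gamma_k$. For a diagonal argument the off‑diagonal derivatives vanish: $A+t(E_{ij}+E_{ji})$ and $A-t(E_{ij}+E_{ji})$ are orthogonally conjugate (flip the signs of the $i$‑th row and column), so $\sigma_m(A+t(E_{ij}+E_{ji}))$ is even in $t$, whence $\frac{\partial F}{\partial a_{ij}}=0$ at $A$ for $i\ne j$, while $\frac{\partial F}{\partial a_{ii}}=\frac{\partial f}{\partial\lambda_i}(\lambda)$ with $f=\sigma_k/\sigma_l$ (these are standard facts, cf. \cite{Reilly73}). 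Thus the proposition is equivalent to $\frac{\partial}{\partial\lambda_i}\bigl(\tfrac{\sigma_k(\lambda)}{\sigma_l(\lambda)}\bigr)>0$ for every $i$ and every $\lambda\in\Gamma_k$.

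\textbf{Step 2: computing the eigenvalue derivative.} Write $\lambda|i\in\mathbb{R}^{n-1}$ for $\lambda$ with its $i$‑th entry deleted. Using the elementary identities $\frac{\partial\sigma_m}{\partial\lambda_i}(\lambda)=\sigma_{m-1}(\lambda|i)$ and $\sigma_m(\lambda)=\sigma_m(\lambda|i)+\lambda_i\sigma_{m-1}(\lambda|i)$, the quotient rule gives, after the $\lambda_i$–terms cancel,
\[
\frac{\partial}{\partial\lambda_i}\!\left(\frac{\sigma_k}{\sigma_l}\right)(\lambda)
=\frac{\sigma_{k-1}(\lambda|i)\,\sigma_l(\lambda|i)-\sigma_k(\lambda|i)\,\sigma_{l-1}(\lambda|i)}{\sigma_l(\lambda)^2}.
\]
Since $\sigma_l(\lambda)>0$ on $\Gamma_k\subseteq\Gamma_l$, it remains to show the numerator is strictly positive. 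By the well‑known implication $\lambda\in\Gamma_k\Rightarrow\lambda|i\in\Gamma_{k-1}$ (\cite{Guan,Spruck}), we may set $\mu:=\lambda|i\in\Gamma_{k-1}\subset\mathbb{R}^{n-1}$, so that $\sigma_0(\mu),\dots,\sigma_{k-1}(\mu)>0$; in particular the case $l=0$ is immediate, and the case $1\le l<k$ with $\sigma_k(\mu)\le0$ (which includes $k=n$, where $\sigma_n(\mu)=0$) follows since then the numerator is $\ge\sigma_{k-1}(\mu)\sigma_l(\mu)>0$.

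\textbf{Step 3: Newton–MacLaurin.} In the remaining case $1\le l<k$ and $\sigma_k(\mu)>0$, i.e.\ $\mu\in\Gamma_k$, the Newton–MacLaurin inequalities (\cite{Guan,Spruck,lieb}) give that $j\mapsto\frac{\sigma_j(\mu)/\binom{n-1}{j}}{\sigma_{j-1}(\mu)/\binom{n-1}{j-1}}$ is non‑increasing; combined with the strict monotonicity of $j\mapsto\binom{n-1}{j}/\binom{n-1}{j-1}=\frac{n-j}{j}$ this yields $\frac{\sigma_k(\mu)}{\sigma_{k-1}(\mu)}<\frac{\sigma_l(\mu)}{\sigma_{l-1}(\mu)}$, which is exactly the strict positivity of the numerator. (A cleaner packaging: since $\sigma_k/\sigma_l>0$ on $\Gamma_k$, positivity of $D(\sigma_k/\sigma_l)$ is equivalent to positivity of $D\log(\sigma_k/\sigma_l)=\sum_{j=l+1}^{k}D\log(\sigma_j/\sigma_{j-1})$, so one reduces at once to $l=k-1$, where the numerator becomes $\sigma_{k-1}(\mu)^2-\sigma_{k-2}(\mu)\sigma_k(\mu)$ and strictness is Newton's inequality on $\Gamma_{k-1}$.) I expect the only real subtlety — the main obstacle — to be obtaining the \emph{strict} inequality uniformly (it must hold even when all $\lambda_j$ coincide, and the degenerate cases $\sigma_k(\lambda|i)\le0$ and $k=n$ must be dispatched separately); this is exactly what the sharp form of Newton–MacLaurin, with its defect factor $\binom{n-1}{k-2}\binom{n-1}{k}/\binom{n-1}{k-1}^2<1$ strictly below one, provides. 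Everything else is bookkeeping with symmetric‑function identities.
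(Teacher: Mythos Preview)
Your argument is correct. In the paper this proposition is not proved at all: it is simply quoted as a known fact with references to \cite{Guan,Spruck,lieb}, so there is no ``paper's own proof'' to compare against. Your write-up supplies a self-contained proof along the standard lines (diagonalize, reduce to $\partial_{\lambda_i}(\sigma_k/\sigma_l)>0$, simplify the numerator to $\sigma_{k-1}(\lambda|i)\sigma_l(\lambda|i)-\sigma_k(\lambda|i)\sigma_{l-1}(\lambda|i)$, and invoke Newton's inequality on the truncated vector $\lambda|i\in\Gamma_{k-1}$), and you correctly isolate and dispose of the borderline cases $l=0$, $\sigma_k(\lambda|i)\le 0$, and $k=n$ before applying the strict Newton--MacLaurin bound. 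The ``cleaner packaging'' via $\log(\sigma_k/\sigma_l)=\sum_{j=l+1}^{k}\log(\sigma_j/\sigma_{j-1})$ is also valid since each summand has positive gradient on $\Gamma_k$; just note that this telescoping only gives a \emph{sufficient} reduction to $l=k-1$, which is all you need.
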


We recall the Newton-MacLaurin inequalities. 
\begin{proposition}[Theorems 51 and 52 in \cite{Hardy}]
For $1\leq k\leq n-1$ and $A=(a_{ij})\in \Gamma_{k}$,
\begin{equation}\label{ineq:N-M}
\frac{\frac{\sigma_{k+1}(A)}{\binom{n}{k+1}}}{\frac{\sigma_{k}(A)}{\binom{n}{k}}} \leq \frac{\frac{\sigma_{k}(A)}{\binom{n}{k}}}{\frac{\sigma_{k-1}(A)}{\binom{n}{k-1}}}
\end{equation}
and
\begin{equation*}
\frac{\sigma_{k+1}(A)}{\binom{n}{k+1}}\leq \left(\frac{\sigma_{k}(A)}{\binom{n}{k}}\right)^{\frac{k+1}{k}}.
\end{equation*}
For both inequalities, the equality occurs if and only if $A=cI$ for some $c>0$, where $I$ is the identity matrix.
\end{proposition}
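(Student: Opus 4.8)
The plan is to obtain both inequalities from the classical Newton inequalities for the normalized symmetric functions $p_j(\lambda):=\sigma_j(\lambda)/\binom{n}{j}$ of $\lambda=\lambda(A)\in\mathbb R^n$, and to prove those by the standard Rolle-theorem argument applied to an auxiliary polynomial. First I would dispose of the trivial cases and set up the reduction. Since $A\in\Gamma_k$ we have $p_0=1$ and $p_1,\dots,p_k>0$; if $p_{k+1}\le 0$ then \eqref{ineq:N-M} holds because its left-hand side is $\le 0$ while $p_k/p_{k-1}>0$, and $p_{k+1}\le 0<p_k^{(k+1)/k}$, with strict inequality in both, so equality cannot occur. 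Hence it suffices to treat $A\in\Gamma_{k+1}$, in which case $p_0,\dots,p_{k+1}$ are all positive, and to establish
\begin{equation}\label{eq:newton-sketch}
p_{j-1}p_{j+1}\le p_j^2,\qquad 1\le j\le k.
\end{equation}
Granting \eqref{eq:newton-sketch}: the case $j=k$, divided by $p_{k-1}p_k>0$, is exactly \eqref{ineq:N-M}; and for the second inequality one puts $a_j:=\log p_j$, observes that \eqref{eq:newton-sketch} says the finite sequence $(a_j)_{0\le j\le k+1}$ is concave with $a_0=0$, hence $a_j/j$ is non-increasing in $j$, so $a_{k+1}/(k+1)\le a_k/k$, which on exponentiating is $p_{k+1}\le p_k^{(k+1)/k}$.

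For \eqref{eq:newton-sketch} itself I would use the monic polynomial $Q(t)=\prod_{i=1}^n(t+\lambda_i)=\sum_{m=0}^n\binom{n}{m}p_{n-m}t^m$, all of whose roots $-\lambda_i$ are real. For a fixed $j$, differentiating $Q$ exactly $n-j-1$ times produces, by Rolle's theorem, a polynomial $R$ of degree $j+1$ with only real roots whose coefficients of $t^0,t^1,t^2$ are explicit positive constants times $p_{j+1},p_j,p_{j-1}$; because $p_{j+1}>0$ we have $R(0)\neq 0$, so the reversed polynomial $\widetilde R(t):=t^{j+1}R(1/t)$ again has degree $j+1$ and only real roots (its roots are the reciprocals of those of $R$). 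Differentiating $\widetilde R$ a further $j-1$ times yields, again by Rolle, a quadratic $\alpha t^2+\beta t+\gamma$ with real roots whose coefficients are those same three quantities up to factorials; its discriminant inequality $\beta^2\ge 4\alpha\gamma$, after the binomial and factorial factors collapse to a common factor $(n!)^2$, is precisely $p_j^2\ge p_{j-1}p_{j+1}$. The one point demanding care here is this last piece of bookkeeping — checking that all the combinatorial constants cancel to leave exactly $p_j^2-p_{j-1}p_{j+1}$; everything else is routine, Rolle's theorem being the only analytic input.

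For the equality claims I would run this chain backwards. If equality holds in \eqref{ineq:N-M} then $p_{k+1}>0$ and the quadratic produced above (for $j=k$) has a double root $t_0$; using the elementary fact that a polynomial $g$ with only real roots whose derivative vanishes to order $m\ge 2$ at a point must itself vanish there to order $\ge m+1$ (every critical point of such a $g$ lying strictly between two of its consecutive roots is a simple zero of $g'$), an upward induction shows that $\widetilde R$, and hence $R$, has a single distinct root; then, reading $Q$ as the result of antidifferentiating $R$ repeatedly and using that a polynomial of degree $\ge 3$ with only real roots whose derivative has a single distinct root must itself have a single distinct root (an antiderivative of $a(t-s_0)^m$ is $\tfrac{a}{m+1}(t-s_0)^{m+1}+b$, and real-rootedness forces $b=0$), one concludes $Q(t)=(t-s_0)^n$, i.e. all $\lambda_i$ equal a common value $c$, with $c>0$ since $\sigma_1(A)=nc>0$; thus $A=cI$, and the converse is an immediate computation. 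Equality in the second inequality forces $k\log p_{k+1}=(k+1)\log p_k$, which together with the concavity of $(a_j)$ and $a_0=0$ makes the increments $a_1-a_0,\dots,a_{k+1}-a_k$ all equal, so $p_{j-1}p_{j+1}=p_j^2$ for every $1\le j\le k\le n-1$, and the Newton equality case just proved again yields $A=cI$. Besides the bookkeeping already noted, the only other delicate step is tracking multiplicities through this backward induction.
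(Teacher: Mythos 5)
Your argument is correct, but it is not parallel to anything in the paper: the paper does not prove this proposition at all — it is recalled as a classical fact (the Newton--MacLaurin inequalities) with citations to Guan, Spruck and Lieberman, and the equality characterization is likewise quoted. What you supply is a complete, self-contained derivation along the classical lines: reduce to $A\in\Gamma_{k+1}$ (the case $p_{k+1}\le 0$ being strict and trivial), prove Newton's inequalities $p_{j-1}p_{j+1}\le p_j^2$ by repeatedly differentiating $\prod_i(t+\lambda_i)$, reversing, differentiating again down to a real-rooted quadratic and reading off its discriminant, then get the MacLaurin inequality from log-concavity of $(p_j)$ with $p_0=1$. I checked the bookkeeping you flagged: the quadratic one obtains is $\frac{n!}{2}p_{j+1}t^2+n!\,p_j t+\frac{n!}{2}p_{j-1}$, so the discriminant is exactly $(n!)^2(p_j^2-p_{j-1}p_{j+1})$, as claimed; your reduction also sidesteps the only delicate point of the reversal step, since $p_{j+1}>0$ guarantees $R(0)\neq 0$. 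The equality analysis (double root of the quadratic propagated up through the derivatives of the reversed polynomial via the multiplicity count, then back up to $Q$ through antiderivatives, forcing $Q=(t-s_0)^n$ and hence $A=cI$ with $c>0$ from $\sigma_1>0$) is sound, and the equality case of the MacLaurin inequality correctly reduces to a Newton equality via the equal-increments argument. What your route buys over the paper's citation is self-containedness and, importantly for the paper's later rigidity arguments, an explicit proof of the equality case $A=cI$, which the paper uses but does not justify beyond the references; the cost is length, which is presumably why the authors chose to cite rather than prove.
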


Since we are considering the Hessian quotient equations, we need some variants of the above inequality.
\begin{lemma}\label{Th:IneqLem}
If 
\begin{equation} \label{eq:sigma_k/l}
\frac{\sigma_{k}(A)}{\sigma_{l}(A)}=\frac{\binom{n}{k}}{\binom{n}{l}} 
\end{equation}
and $A\in \Gamma_{k}$ for $0\leq l< k\leq n$, then
\begin{equation*}
\frac{\sigma_{k-1}(A)}{\sigma_{k}(A)}\geq \frac{k}{n-k+1}~\text{ and }~\frac{\sigma_{l+1}(A)}{\sigma_{l}(A)}\geq \frac{n-l}{l+1}.
\end{equation*}
Each equality occurs if and only if $A=cI$ for some $c>0$, where $I$ is the identity matrix. 
Moreover, we also have 
\begin{equation*}
\frac{\sigma_{k+1}(A)}{\sigma_{k}(A)}\leq \frac{n-k}{k+1}~\text{ and }~\frac{\sigma_{l-1}(A)}{\sigma_{l}(A)}\leq \frac{l}{n-l+1}.
\end{equation*}
\end{lemma}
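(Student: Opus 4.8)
The plan is to reduce the whole statement to the Newton--MacLaurin inequalities \eqref{ineq:N-M} by a single telescoping argument. First I would normalize, setting $p_j:=\sigma_j(A)/\binom{n}{j}$; since $A\in\Gamma_k$ these are positive for $0\le j\le k$, and the hypothesis \eqref{eq:sigma_k/l} says exactly $p_k=p_l$. Introduce the consecutive ratios $r_j:=p_{j+1}/p_j>0$ for $l\le j\le k-1$, so that $\prod_{j=l}^{k-1}r_j=p_k/p_l=1$. Since $A$ lies in $\Gamma_m$ for every $m\le k$, the inequality \eqref{ineq:N-M} holds at each index $m$ with $l+1\le m\le k-1$ (all of which lie in $[1,n-1]$) and reads $r_m\le r_{m-1}$; hence the $r_j$ form a non-increasing chain $r_l\ge r_{l+1}\ge\cdots\ge r_{k-1}$.

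Next I would extract the two lower bounds. A non-increasing sequence of positive numbers whose product is $1$ must have largest term $\ge 1$ and smallest term $\le 1$, so $r_l\ge 1$ and $r_{k-1}\le 1$; unwinding the normalizations turns these into $\sigma_{l+1}(A)/\sigma_l(A)\ge(n-l)/(l+1)$ and $\sigma_{k-1}(A)/\sigma_k(A)\ge k/(n-k+1)$. For the two ``Moreover'' inequalities I would lengthen the chain by one index at each end, again via \eqref{ineq:N-M}: applied with index $k$ (legitimate since $A\in\Gamma_k$; trivial when $k=n$ as $\sigma_{n+1}=0$) it gives $p_{k+1}/p_k\le r_{k-1}\le 1$, i.e. $\sigma_{k+1}(A)/\sigma_k(A)\le(n-k)/(k+1)$; applied with index $l$ when $l\ge 1$ it gives $p_l/p_{l-1}\ge r_l\ge 1$, i.e. $\sigma_{l-1}(A)/\sigma_l(A)\le l/(n-l+1)$ (the case $l=0$ being trivial).

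For the equality statements I would invoke the equality case of \eqref{ineq:N-M}. If $\sigma_{l+1}(A)/\sigma_l(A)=(n-l)/(l+1)$, then $r_l=1$; being the largest of the $r_j$ while their product is $1$, this forces $r_j=1$ for every $l\le j\le k-1$, so (when $k\ge l+2$) $r_{l+1}=r_l$ is an equality in \eqref{ineq:N-M}, whence $A=cI$ with $c>0$. The other equality forces $r_{k-1}=1$ and then likewise all $r_j=1$. The converse is immediate: for $A=cI$ one has $p_j=c^j$ and every $r_j=c$, so all the displayed inequalities are equalities.

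I do not expect a genuine obstacle here; the argument is essentially bookkeeping. The points that demand attention are, at each use of \eqref{ineq:N-M} or of its equality case, checking that the matrix sits in the required G{\aa}rding cone and that the index stays in $[1,n-1]$, and isolating the boundary cases $k=n$ and $l=0$ (and noting that when $k=l+1$ the first inequality degenerates into an identity already forced by the constraint).
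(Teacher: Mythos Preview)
Your proof is correct and takes essentially the same approach as the paper: both telescope the normalized ratio $p_k/p_l=1$ into consecutive quotients and use the Newton--MacLaurin monotonicity \eqref{ineq:N-M} to bound the extremal quotients by $1$, then extend by one further application of \eqref{ineq:N-M} at each end for the ``Moreover'' inequalities. The only cosmetic differences are that the paper packages the telescoping as $(p_k/p_{k-1})^{k-l}\le 1\le (p_{l+1}/p_l)^{k-l}$ rather than your ``non-increasing chain with product $1$'' framing, and it splits the $\sigma_{k+1}/\sigma_k$ bound into the cases $\sigma_{k+1}\le 0$ and $\sigma_{k+1}>0$ where you simply invoke \eqref{ineq:N-M} at index $k$ directly.
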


\begin{proof}
Notice
\begin{equation*}
\frac{\frac{\sigma_{k}(A)}{\binom{n}{k}}}{\frac{\sigma_{l}(A)}{\binom{n}{l}}}=\frac{\frac{\sigma_{k}(A)}{\binom{n}{k}}}{\frac{\sigma_{k-1}(A)}{\binom{n}{k-1}}}\frac{\frac{\sigma_{k-1}(A)}{\binom{n}{k-1}}}{\frac{\sigma_{k-2}(A)}{\binom{n}{k-2}}}\cdots\frac{\frac{\sigma_{l+2}(A)}{\binom{n}{l+2}}}{\frac{\sigma_{l+1}(A)}{\binom{n}{l+1}}}\frac{\frac{\sigma_{l+1}(A)}{\binom{n}{l+1}}}{\frac{\sigma_{l}(A)}{\binom{n}{l}}}.
\end{equation*}
It follows from the Newton-MacLauring inequality \eqref{ineq:N-M} that
\begin{equation*}
\left( \frac{\frac{\sigma_{k}(A)}{\binom{n}{k}}}{\frac{\sigma_{k-1}(A)}{\binom{n}{k-1}}} \right)^{k-l}\leq \frac{\frac{\sigma_{k}(A)}{\binom{n}{k}}}{\frac{\sigma_{l}(A)}{\binom{n}{l}}} \leq \left( \frac{\frac{\sigma_{l+1}(A)}{\binom{n}{l+1}}}{\frac{\sigma_{l}(A)}{\binom{n}{l}}} \right)^{k-l}.
\end{equation*}
Then, 
the condition \eqref{eq:sigma_k/l} implies 
\begin{equation*}
\frac{\frac{\sigma_{k}(A)}{\binom{n}{k}}}{\frac{\sigma_{k-1}(A)}{\binom{n}{k-1}}}\leq1 \leq \frac{\frac{\sigma_{l+1}(A)}{\binom{n}{l+1}}}{\frac{\sigma_{l}(A)}{\binom{n}{l}}}.
\end{equation*}
Thus it leads to
\[ \frac{\sigma_{k}(A)}{\sigma_{k-1}(A)}\leq \frac{n-k+1}{k}, \quad\quad \text{and}\quad   \frac{\sigma_{l+1}(A)}{\sigma_{l}(A)}\geq \frac{n-l}{l+1}. \]
Furthermore, if $\sigma_{k+1}(A)>0$ or $l\geq 1$, then the Newton-MacLaurin inequality implies \[ \frac{\sigma_{k+1}(A)}{\sigma_{k}(A)}\leq \frac{n-k}{k+1} \] or \[ \frac{\sigma_{l-1}(A)}{\sigma_{l}(A)}\leq \frac{l}{n-l+1}. \]
If $\sigma_{k+1}(A)\leq 0$ or $l=0$, then the above inequalities hold automatically.
\end{proof}

\subsection{Hessian operators}

Let $\Omega$ be an open domain in $M^n(K)$ and $u\in C^2(\Omega)$. The $k$-Hessian operator $\sigma_k(\nabla^2 u+Kug)$ is defined as the $k$-th elementary symmetric function of $\nabla^2u+Kug$, where $\nabla^{2}u$ denotes the Hessian of $u$ and $g$ is the metric of $M^{n}(K)$. 

For $2\leq k\leq n$, the $k$-th Hessian operator is fully nonlinear. We are also interested in the Hessian quotient operator $\frac{\sigma_{k}(\nabla^{2}u+Kug)}{\sigma_{l}(\nabla^{2}u+Kug)}$,
for $0\leq l<k\leq n$, 
which is a more general class of fully nonlinear operators. 
When $l=0$, it is a $k$-Hessian operator.

From now on, we calculate under a local orthonormal frame $\{ e_{1}, \cdots,e_{n} \}$ and denote a symmetric 2-tensor $b:=\nabla^{2}u+Kug$ and $\nabla_{k}b_{ij}:=b_{ijk}$. Einstein's summation convention is used for repeated indexes unless otherwise stated. 

\begin{proposition}
The 2-tensor $b$ is a Codazzi tensor and the $k$-Hessian operator is divergence-free. In other words, $b_{ijk}=b_{ikj}$ and
for any $u\in C^{3}(\Omega)$, we have
\[ \sum_{i}\nabla_{i}\big(\sigma_{k}^{ij}(\nabla^{2}u+Kug)\big)=0, \]
with respect to any local orthornormal frame $\{e_1, \cdots, e_n\}$. 
\end{proposition}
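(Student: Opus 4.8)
The statement to prove has two parts: first, that $b=\nabla^2 u+Kug$ is a Codazzi tensor, i.e. $b_{ijk}=b_{ikj}$; and second, that the $k$-Hessian operator is divergence-free, i.e. $\sum_i \nabla_i(\sigma_k^{ij}(b))=0$.

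For the Codazzi property: I would compute $\nabla_k b_{ij} = \nabla_k(\nabla^2 u)_{ij} + K(\nabla_k u) g_{ij}$, since $\nabla g = 0$. The term $K(\nabla_k u)g_{ij}$ is symmetric in $j,k$ trivially. For the third covariant derivative of $u$, the commutation (Ricci) identity gives $(\nabla^3 u)_{ijk} - (\nabla^3 u)_{ikj} = -R_{ljk\cdot}{}^{\phantom{l}}u^l$-type curvature term; more precisely $u_{ijk}-u_{ikj} = -R^l{}_{ijk} u_l$ where I use the convention so that in a space form of constant curvature $K$ we have $R_{mijk}=K(g_{mj}g_{ik}-g_{mk}g_{ij})$, hence $u_{ijk}-u_{ikj} = -K(g_{ij}u_k - g_{ik}u_j) = -K g_{ij}u_k + K g_{ik}u_j$. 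Adding the $Ku g$ contribution: $b_{ijk}-b_{ikj} = (u_{ijk}-u_{ikj}) + K u_k g_{ij} - K u_j g_{ik} = (-Kg_{ij}u_k + Kg_{ik}u_j) + Kg_{ij}u_k - Kg_{ik}u_j = 0$. So the precise choice of the $Kug$ term is exactly what cancels the curvature correction. I would present this as the key computation, being careful about sign conventions for the curvature tensor.

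For the divergence-free property: this is a standard fact that $\sum_i \nabla_i \sigma_k^{ij}(A) = 0$ whenever $A=A(x)$ is a symmetric Codazzi tensor, and it follows from the divergence-free structure of the Newton tensors. I would argue it directly: $\sigma_k^{ij}(b)$ is (up to a constant) the $(k-1)$-th Newton transformation $T_{k-1}(b)$, and one shows $\nabla_i (T_{k-1})^{ij} = 0$ using the Codazzi condition $b_{ijk}=b_{ikj}$ just established. Concretely, since $\sigma_k^{ij}$ depends polynomially on the entries $b_{pq}$, the chain rule gives $\nabla_i \sigma_k^{ij}(b) = \sigma_k^{ij,pq}(b)\, b_{pqi}$; then one uses the explicit formula $\sigma_k^{ij} = \frac{1}{(k-1)!}\delta^{j j_2\cdots j_k}_{i i_2\cdots i_k} b_{i_2 j_2}\cdots b_{i_k j_k}$, so $\nabla_i\sigma_k^{ij}(b)$ becomes a sum of terms each containing a factor $\delta^{j j_2\cdots j_k}_{i i_2\cdots i_k} b_{i_2 j_2 i}$; antisymmetrizing the generalized Kronecker delta in the lower indices $i$ and $i_2$ against the Codazzi-symmetric $b_{i_2 j_2 i}=b_{i_2 i j_2}$ forces each such term to vanish. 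This is the classical computation behind the divergence-free nature of Newton tensors on space forms.

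The main obstacle is purely bookkeeping: getting the curvature sign convention consistent so that the $Kug$ correction visibly cancels the commutator term, and then carefully tracking the index antisymmetrization in the divergence computation so the cancellation is transparent. Neither step has a genuine conceptual difficulty — both are ``well-known'' facts — so I would keep the write-up short, emphasizing the Ricci identity in the constant-curvature case for part one and the generalized-Kronecker-delta antisymmetrization argument for part two, and referring to \cite{Reilly73} for the Newton-tensor identities if a more detailed treatment is desired.
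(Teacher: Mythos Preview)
Your proposal is correct and follows essentially the same approach as the paper: the Codazzi property is established via the Ricci identity together with the explicit form of the curvature tensor in a space form, and the divergence-free property is attributed to the standard Newton-tensor computation for Codazzi tensors (the paper simply cites \cite{Reilly73} for this step, whereas you sketch the generalized-Kronecker-delta antisymmetrization argument, but the content is the same).
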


\begin{proof}
Recall that the Riemmanian curvature tensor of space form $M^{n}(K)$ is given by $R_{ijkl}=K(\delta_{ik}\delta_{jl}-\delta_{il}\delta_{jk})$. By the Ricci identity, we have 
\begin{align*}
b_{ijk}&=u_{ijk}+Ku_{k}\delta_{ij} \\
&=u_{ikj}+u_{m}R_{mijk}+Ku_{k}\delta_{ij} \\
&=u_{ikj}+Ku_{m}(\delta_{mj}\delta_{ik}-\delta_{mk}\delta_{ij})+Ku_{k}\delta_{ij} \\
&=u_{ikj}+Ku_{j}\delta_{ik}=b_{ikj}.
\end{align*}
This means $b$ is a Codazzi tensor. For such $b=\nabla^{2}u+Kug$, a direct computation (refer to Proposition 2.1 in \cite{Reilly73} or Proposition 2.3 in \cite{pietra}) yields that the $k$-Hessian operator is divergence-free.
\end{proof}

\section{$k$-convexity of $u$}
\label{Sec:convexity}

Now, we show that the solution $u$ to the overdetermined problem \eqref{Eq:hessian} is naturally $k$-admissible, i.e., $\nabla^{2}u+Kug\in \Gamma_{k}$. This property is used to ensure that we can use the maximum principle and the Newton-MacLaurin inequality later.

\begin{lemma}\label{Th:binGammak}
For $2\leq k\leq n$ and $0\leq l<k$, if $\Omega$ is a $C^{2}$ bounded domain and $u\in C^{2}(\overline{\Omega})$ is a solution of \eqref{Eq:hessian}, then $(\nabla^{2}u+Kug)\in \Gamma_{k}$ for all $x\in \overline{\Omega}$.
\end{lemma}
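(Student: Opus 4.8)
The plan is to show first that $\nabla^2 u + Kug$ lies in the positive cone $\Gamma_k$ on the boundary $\partial\Omega$, and then to propagate this to the interior using the connectedness of $\Omega$ together with the structure of the G\aa rding cones. On $\partial\Omega$, the hypotheses give us a lot of control: since $u = Kc_1$ is constant there and $u_\nu = c_2 > 0$, the boundary is a level set of $u$ on which the gradient is transverse and inward-pointing (in the sense that $u$ decreases into $\Omega$ when $K>0$, with the sign dictated by the condition $Ku\ge 0$). Choosing an adapted orthonormal frame $\{e_1,\dots,e_{n-1},e_n=-\nu\}$ at a boundary point, the tangential part of $\nabla^2 u$ is, up to the sign of $c_2$, a multiple of the second fundamental form plus a term involving $Ku$, while the mixed and normal entries can be read off from differentiating the boundary conditions. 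The key point is that at a point where $g(\nu,\partial_r)>0$ is largest (or by a suitable touching-ball / maximum-principle argument at $\partial\Omega$), one can show the relevant symmetric $2$-form is positive definite, hence trivially in $\Gamma_k$. Actually the cleanest route is: the equation \eqref{Eq:hessian} forces $\sigma_k(b)/\sigma_l(b) = \binom{n}{k}/\binom{n}{l} > 0$ everywhere on $\overline\Omega$, so $b$ never leaves the boundary $\partial\Gamma_k$ of the cone without $\sigma_k(b)$ vanishing; combined with an a priori sign at one point this pins down the branch.

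Concretely, I would argue as follows. Since $\Omega$ is connected and $u\in C^2(\overline\Omega)$, the continuous map $x\mapsto \lambda(b(x))\in\mathbb R^n$ has connected image. The set $\Gamma_k$ is an open connected component of $\{\sigma_k \ne 0\}$-type regions; more precisely, $\sigma_k(b) \ne 0$ on all of $\overline\Omega$ because $\sigma_k(b) = \frac{\binom{n}{k}}{\binom{n}{l}}\sigma_l(b)$ and one shows $\sigma_l(b)>0$ (for $l=0$ this is trivial; for $l\ge 1$ it follows once $b\in\Gamma_l$, bootstrapping from $\Gamma_1$). Hence the image of $\lambda(b)$ avoids the hypersurface $\{\sigma_k=0\}$. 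It therefore suffices to exhibit a single point $x_0\in\overline\Omega$ with $b(x_0)\in\Gamma_k$, because $\Gamma_k$ is exactly the connected component of $\{\sigma_1>0,\dots,\sigma_k>0\}$ reachable from the positive ray, and no path can cross $\{\sigma_j=0\}$ for $j\le k$ without $\sigma_k$ changing sign or vanishing along the way — one has to be a little careful here and check the lower $\sigma_j$'s too, which is where I expect to use the Newton–MacLaurin chain and the specific normalization of the right-hand side.

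For the anchor point, I would use a point $x_0\in\overline\Omega$ where $u$ attains its extreme value compatible with $Ku\ge 0$: when $K>0$ (so $u>0$ in $\overline\Omega$ and $u=Kc_1>0$ on $\partial\Omega$), take an interior maximum of $u$; when $K<0$, take an interior minimum of $u$ if $u$ is nonconstant, or a boundary point otherwise; when $K=0$, an interior minimum. At such a critical point $\nabla u(x_0)=0$, so $b(x_0)=\nabla^2 u(x_0)+Ku(x_0)g$, and $\nabla^2 u(x_0)$ is negative semidefinite at a max / positive semidefinite at a min. Using $Ku\ge 0$ and the sign of $K$ one checks that at the chosen extremum $b(x_0)$ is definite with the correct sign — e.g. for $K<0$ at an interior minimum, $\nabla^2 u(x_0)\ge 0$ and $Ku(x_0)\le 0$, which looks like the wrong sign, so in fact the honest choice is the one that makes $b$ land in $\Gamma_k$, and this is exactly the place where the curvature-sign hypotheses in \eqref{Eq:hessian} are doing real work; one may instead need to run the argument at $\partial\Omega$ where the extra data $u_\nu=c_2>0$ gives the missing information. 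The main obstacle, then, is precisely this: verifying positive-definiteness (or at least $\Gamma_k$-membership) of $b$ at the anchor point, which requires combining $Ku\ge 0$, the sign of $c_2$, the boundary geometry, and — in the spherical case — the hemisphere restriction, rather than any soft topological input; the topological "no cone-boundary crossing" step is then routine.
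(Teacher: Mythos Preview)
Your overall strategy---find one anchor point with $b=\nabla^2u+Kug\in\Gamma_k$, then propagate by connectedness and the fact that $\sigma_k(b)$ never vanishes---is exactly the paper's strategy. But your execution of the anchor step contains a real error, and it is precisely the place you flag as ``the main obstacle.''

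\medskip

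\textbf{The anchor point.} You split into cases on the sign of $K$ and pick different extrema in each. This is where you go wrong. For $K>0$ you propose an interior \emph{maximum}, where $\nabla^2u\le 0$; adding $Ku\,g\ge 0$ gives no sign on $b$. For $K<0$ you write ``$\nabla^2u(x_0)\ge 0$ and $Ku(x_0)\le 0$,'' but the hypothesis is $Ku\ge 0$, not $u\ge 0$; so in fact $Ku(x_0)\ge 0$ and there is no sign conflict at all. The correct, uniform choice is the interior \emph{minimum}: since $u_\nu=c_2>0$ on $\partial\Omega$, the minimum of $u$ over $\overline\Omega$ is attained at some $x_0\in\Omega$, where $\nabla^2u(x_0)\ge 0$. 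The hypothesis $Ku\ge 0$ then gives $Ku(x_0)\,g\ge 0$ regardless of the sign of $K$, hence $b(x_0)\ge 0$. Because the equation forces $\sigma_l(b)\ne 0$ and the quotient is positive, $\sigma_k(b)(x_0)>0$; for a nonnegative matrix with $\sigma_k>0$ one has $\sigma_j>0$ for all $j\le k$, so $b(x_0)\in\Gamma_k$. No case split, no boundary geometry, no hemisphere restriction needed at this step.

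\medskip

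\textbf{The propagation.} Your ``no path can cross $\{\sigma_j=0\}$ for $j\le k$ without $\sigma_k$ vanishing'' is not obvious as stated, and your bootstrapping ``$\sigma_l(b)>0$ once $b\in\Gamma_l$'' is circular. What you actually need is either (i) the G\aa rding fact that $\Gamma_k$ is the connected component of $\{\sigma_k>0\}$ containing the identity, so that $\sigma_k(b)>0$ on the connected set $\overline\Omega$ together with $b(x_0)\in\Gamma_k$ finishes the proof; or (ii) the paper's route: show $S=\{x:b(x)\in\Gamma_k\}$ is open (clear), nonempty (the anchor), and closed (if $x_i\to\bar x$ with $b(x_i)\in\Gamma_k$ then $b(\bar x)\in\overline{\Gamma_k}$, and Newton--MacLaurin plus $\sigma_k(b)(\bar x)>0$ upgrades this to $\Gamma_k$). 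Either works once the anchor is fixed.

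\medskip

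The paper additionally detours through an elliptic point of $\partial\Omega$ (computing $\nabla^2u$ in a frame with $e_n=\nu$ and using the boundary data to see the block structure) to produce its anchor in $\Gamma_k$; this is not strictly necessary once you observe that $b(x_0)\ge 0$ with $\sigma_k(b)(x_0)>0$ already lands in $\Gamma_k$, but it gives an alternative concrete anchor if you prefer to avoid that last observation.
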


\begin{proof}
The proof for the case $l=0$ can be found in \cite{BNST} and \cite{Gao-Jia-Yan}. Hence we assume $1\leq l<k$. From $u_{\nu}=c_{2}>0$ on $\partial\Omega$, we know there is a point $x_{0}\in \Omega$ such that $\displaystyle u(x_{0})=\min_{\overline{\Omega}}u$. Then $\nabla^{2}u(x_{0})\geq 0$. From assumption $Ku\geq 0$ in $\Omega$, we know $(\nabla^{2}u+Kug)(x_{0})\geq 0$. Moreover, \[ \frac{\sigma_{k}(\nabla^{2}u+Kug)}{\sigma_{l}(\nabla^{2}u+Kug)}=\dfrac{\binom{n}{k}}{\binom{n}{l}}>0 \] implies $\sigma_{k}(\nabla^{2}u+Kug)(x_{0})>0$.

Next, we show $\sigma_{k}(\nabla^{2}u+Kug)(x)>0$ for all $x\in \overline{\Omega}$. Obviously, $\sigma_{k}(\nabla^{2}u+Kug)=0$ cannot occur since \[ \frac{\sigma_{k}(\nabla^{2}u+Kug)}{\sigma_{l}(\nabla^{2}u+Kug)}>0. \] If there exists $y_{0}\in \overline{\Omega}$ such that $\sigma_{k}(\nabla^{2}u+Kug)(y_{0})<0$, from $\sigma_{k}(\nabla^{2}u+Kug)(x_{0})>0$ and the smoothness of $u$, we know there exists $z_{0}\in \overline{\Omega}$ such that $\sigma_{k}(\nabla^{2}u+Kug)(z_{0})=0$ which is impossible. Hence $\sigma_{k}(\nabla^{2}u+Kug)(x)>0$ for all $x\in\overline{\Omega}$.

The boundary condition implies $\vert\nabla u\vert=c_{2}>0$ on $\partial\Omega$ which means $\nabla u\neq 0$ on $\partial\Omega$. By the implicit function theorem, we know $\partial\Omega$ is a hypersurface in $M^{n}(K)$ and $\nu=\frac{\nabla u}{\vert\nabla u\vert}$. By a suitable choice of frame such that $e_{n}=\nu$, Hessian of $u$ at any fixed point of $\partial\Omega$ has the following form:
\begin{equation*}
\nabla^{2}u=\begin{pmatrix}
c_{2}\kappa_{1} & 0 & 0 &\cdots &0 & u_{1n} \\
0 & c_{2}\kappa_{2} & 0 &\cdots &0 & u_{2n} \\
0 & 0 & c_{2}\kappa_{3} &\cdots &0 & u_{3n} \\
\vdots & \vdots & \vdots &\ddots &\vdots & \vdots \\
0 & 0 & 0 & \cdots & c_{2}\kappa_{n-1} & u_{n-1\,n} \\
u_{n1} & u_{n2} & u_{n3} & \cdots &u_{n\,n-1} & u_{nn}
\end{pmatrix},
\end{equation*}
where $\kappa_{1},...,\kappa_{n-1}$ are principal curvatures of $\partial\Omega$. On $\partial\Omega$, $u_{n}=c_{2}$ implies $u_{1n}=\cdots =u_{n-1\,n}=0$. Thus, matrix $\nabla^{2}u$ is diagonal and
\begin{equation*}
\nabla^{2}u+Kug=\begin{pmatrix}
c_{2}\kappa_{1}+Ku &   & &  \\
  & \ddots &  & \\
  &  & c_{2}\kappa_{n-1}+Ku &  \\
 &  &   & u_{nn}+Ku
\end{pmatrix}.
\end{equation*}
Its $(n-1)\times (n-1)$ submatrix is denoted by
\begin{equation*}
c_{2}\kappa+KuI:=\begin{pmatrix}
c_{2}\kappa_{1}+Ku &   &  \\
  & \ddots &   \\
  &  & c_{2}\kappa_{n-1}+Ku
\end{pmatrix}.
\end{equation*}
Then
\begin{equation}\label{Eq:sigmak>0}
0<\sigma_{k}(\nabla^{2}u+Kug)=(u_{nn}+Ku)\sigma_{k-1}(c_{2}\kappa+KuI)+\sigma_{k}(c_{2}\kappa+KuI).
\end{equation}

It is known that the boundary  $\partial\Omega$ has at least one elliptic point, i.e., there exists at least one point $y\in \partial\Omega$ such that all principal curvatures $\kappa_{1}(y),...,\kappa_{n-1}(y)$ are nonnegative. Combining with $c_{2}>0$, $Ku\geq 0$ and inequality \eqref{Eq:sigmak>0}, we know $\sigma_{k-1}(c_{2}\kappa+KuI)(y)>0$. Then from the Newton-MacLaurin inequality, we have
\begin{align*}
\sigma_{k-1}(\nabla^{2}u+Kug)(y)&=(u_{nn}+Ku)\sigma_{k-2}(c_{2}\kappa+KuI)+\sigma_{k-1}(c_{2}\kappa+KuI) \\
&> -\frac{\sigma_{k-2}(c_{2}\kappa+KuI)\sigma_{k}(c_{2}\kappa+KuI)}{\sigma_{k-1}(c_{2}\kappa+KuI)}+\sigma_{k-1}(c_{2}\kappa+KuI) \\
&>0.
\end{align*}
Similarly, we can get $\sigma_{j}(\nabla^{2}u+Kug)(y)>0$ for all $1\leq j\leq k$. Therefore, $\nabla^{2}u+Kug\in \Gamma_{k}$ at $y$.

Now, we consider the set $S:=\{ x\in\overline{\Omega}\vert\nabla^{2}u+Kug\in\Gamma_{k} \}$. The above argument shows $S$ is nonempty. From $u\in C^{2}(\overline{\Omega})$, we know $S$ is a relatively open set in $\overline{\Omega}$. For any $\bar{x}$ in the relative closure $\overline{S}$ in $\overline{\Omega}$,  there exists a sequence $\{ x_{i} \}_{i=1}^{\infty}$ in $S$ such that  $\lim_{i\rightarrow \infty} x_{i}=\bar{x}$. Then $(\nabla^{2}u+Kug)(\bar{x})\in\overline{\Gamma}_{k}$. From the Newton-MacLaurin inequality and $\sigma_{k}(\nabla^{2}u+Kug)(\bar{x})>0$, we know that $(\nabla^{2}u+Kug)(\bar{x})\in\Gamma_{k}$. This means  $\bar{x}\in S$, therefore  $S$ is also relatively closed in $\overline{\Omega}$. From the connectedness of $\overline{\Omega}$, we know that $S=\overline{\Omega}$. Hence, $(\nabla^{2}u+Kug)\in \Gamma_{k}$ for all $x\in \overline{\Omega}$.

\end{proof}

\section{$P$-function and $\widetilde{P}$-function}
\label{Sec:P}

First, we consider an auxiliary function $P:=\abs{\nabla u}^{2}+Ku^{2}-2u$. It is a generalization of Weinberger's function in \cite{Weinberger} and has been studied in \cite{Qiu-Xia,Ciraolo-Vezzoni19} for linear equations in space forms. For fully nonlinear equations, Ma \cite{X.N.Ma} used the $P$-function for the $2$-dimensional Monge-Ampere equation to obtain a necessary condition of solvability for the capillarity boundary problem. The $P$-functions for the $k$-Hessian equations in the Euclidean space and in the hyperbolic space are studied in \cite{PS} and \cite{Gao-Jia-Yan}.

Let $F(b):=\frac{\sigma_{k}(b)}{\sigma_{l}(b)}$ and $F^{ij}:=\frac{\partial F(b)}{\partial b_{ij}}$ for $b=\nabla^2 u+Kug$. Proposition \ref{Th:positivedefinite} and Lemma \ref{Th:binGammak} show that the matrix $\left(F^{ij}\right)$ is positive definite. Hence the operator $F^{ij}\nabla^{2}_{ij}$ is elliptic.

\begin{lemma}\label{Th:P} 
If $u\in C^{3}(\Omega)$ be a solution to problem \eqref{Eq:hessian} in Theorem \ref{Th:QuoHess}, then \[ F^{ij}\nabla^{2}_{ij}P\geq 0. \] Moreover, either \[P=K^{3}c_{1}^{2}+c_{2}^{2}-2Kc_{1}\quad \text{in }~\overline{\Omega} \] or \[ P<K^{3}c_{1}^{2}+c_{2}^{2}-2Kc_{1}\quad \text{in }~\Omega. \]
\end{lemma}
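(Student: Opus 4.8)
The plan is to establish the pointwise differential inequality $F^{ij}\nabla^{2}_{ij}P\geq 0$ and then apply the strong maximum principle. First I would compute the Hessian of $P=\abs{\nabla u}^{2}+Ku^{2}-2u$, obtaining $\nabla^{2}_{ij}P=2u_{ki}u_{kj}+2u_{k}u_{kij}+2Ku_{i}u_{j}+2(Ku-1)u_{ij}$, and contract with $F^{ij}$. The only term requiring care is the third order one: since $b=\nabla^{2}u+Kug$ is a Codazzi tensor its covariant derivative is fully symmetric, and since the equation says $F(b)=\binom{n}{k}/\binom{n}{l}$ is constant we have $F^{ij}b_{ijm}=\nabla_{m}F(b)=0$ for every $m$; writing $u_{kij}=b_{kij}-Ku_{j}\delta_{ki}$ and using full symmetry of $b_{\cdot\cdot\cdot}$, this gives $F^{ij}u_{k}u_{kij}=-KF^{ij}u_{i}u_{j}$, which cancels the term $2Ku_{i}u_{j}$ after contraction. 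Thus
\[ F^{ij}\nabla^{2}_{ij}P=2F^{ij}u_{ki}u_{kj}+2(Ku-1)F^{ij}u_{ij}. \]

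Next I would evaluate this at a fixed point, in a frame diagonalizing $b$ (so that $\nabla^{2}u$ and $(F^{ij})$ are simultaneously diagonal there). If $\lambda_{1},\dots,\lambda_{n}$ denote the eigenvalues of $b$, the expression above equals $2\sum_{i}F^{ii}(\lambda_{i}-Ku)(\lambda_{i}-1)$, which after expanding and using Euler's identity $\sum_{i}F^{ii}\lambda_{i}=(k-l)F$ becomes $2\big(\sum_{i}F^{ii}\lambda_{i}^{2}-(1+Ku)(k-l)F+Ku\sum_{i}F^{ii}\big)$. Using Proposition \ref{Th:BasicSigmak} I would rewrite $\sum_{i}F^{ii}\lambda_{i}^{2}$ and $\sum_{i}F^{ii}$ through $\sigma_{k\pm1},\sigma_{k},\sigma_{l\pm1},\sigma_{l}$ of $b$, multiply by $\sigma_{l}(b)>0$ (positive because $b\in\Gamma_{k}$ by Lemma \ref{Th:binGammak}), and substitute the equation in the form $\sigma_{k}(b)=\tfrac{\binom{n}{k}}{\binom{n}{l}}\sigma_{l}(b)$, leaving a linear combination of $\sigma_{k+1}$, $\sigma_{l+1}$, $Ku\,\sigma_{k-1}$, $Ku\,\sigma_{l-1}$ and $(1+Ku)\sigma_{k}$. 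Bounding these four pieces by the Newton--MacLaurin inequalities in Lemma \ref{Th:IneqLem}, namely $\sigma_{k+1}/\sigma_{k}\leq\tfrac{n-k}{k+1}$, $\sigma_{l+1}/\sigma_{l}\geq\tfrac{n-l}{l+1}$, $\sigma_{k-1}/\sigma_{k}\geq\tfrac{k}{n-k+1}$ and $\sigma_{l-1}/\sigma_{l}\leq\tfrac{l}{n-l+1}$ (here the hypothesis $Ku\geq0$ is exactly what permits applying the last two to the terms carrying the factor $Ku$), a direct computation shows that the four bounds sum to $0$. Hence $F^{ij}\nabla^{2}_{ij}P\geq 0$, with equality only when all $\lambda_{i}$ are equal.

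For the dichotomy, note that $(F^{ij})$ is positive definite by Proposition \ref{Th:positivedefinite} and Lemma \ref{Th:binGammak}, so $F^{ij}\nabla^{2}_{ij}P\geq0$ exhibits $P$ as a subsolution of a linear second order elliptic operator with no zeroth order term on the connected domain $\Omega$. On $\partial\Omega$ one has $\nabla u=u_{\nu}\nu$, so $P|_{\partial\Omega}=c_{2}^{2}+K(Kc_{1})^{2}-2Kc_{1}=K^{3}c_{1}^{2}+c_{2}^{2}-2Kc_{1}$, a constant. By the strong maximum principle either the maximum of $P$ over $\overline{\Omega}$ is attained at an interior point, in which case $P$ is constant and equals $K^{3}c_{1}^{2}+c_{2}^{2}-2Kc_{1}$ throughout $\overline{\Omega}$, or it is attained only on $\partial\Omega$, in which case $P<K^{3}c_{1}^{2}+c_{2}^{2}-2Kc_{1}$ in $\Omega$.

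I expect the main obstacle to be the inequality $\sum_{i}F^{ii}(\lambda_{i}-Ku)(\lambda_{i}-1)\geq 0$: the individual summands need not be nonnegative, so no termwise estimate works, and the cancellation only emerges after passing to the $\sigma_{j}(b)$'s, eliminating the ratio $\sigma_{k}/\sigma_{l}$ via the equation, and invoking precisely the four Newton--MacLaurin inequalities above, with $Ku\geq0$ entering in an essential way (it is the only place the sign hypothesis is used here).
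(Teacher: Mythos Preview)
Your proposal is correct and follows essentially the same route as the paper: compute $\nabla^{2}_{ij}P$, kill the third–order term via the Codazzi property of $b$ together with $\nabla F=0$, reduce the contraction to the combination of $F^{ij}b_{mi}b_{mj}$, $F^{ij}b_{ij}$, $F^{ij}\delta_{ij}$ (equivalently your $\sum_i F^{ii}\lambda_i^{2}$, $\sum_i F^{ii}\lambda_i$, $\sum_i F^{ii}$), and conclude by the four Newton--MacLaurin bounds of Lemma~\ref{Th:IneqLem} with $Ku\ge0$ securing the sign of the second group; the dichotomy then follows from the strong maximum principle exactly as in the paper. The only cosmetic difference is that the paper rewrites the Hessian of $P$ in terms of $b$ from the outset, whereas you keep $u_{ij}$ and pass through the factored form $\sum_i F^{ii}(\lambda_i-Ku)(\lambda_i-1)$ before expanding---the resulting expression and the inequalities invoked are identical.
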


\begin{proof}
By direct computation, we have $$\frac{1}{2}\nabla_{i}(\vert\nabla u\vert^{2}+Ku^2)=u_{mi}u_{m}+Kuu_{i}=b_{mi}u_{m},$$ and 
\begin{align*}
\frac{1}{2}\nabla^{2}_{ij}P&=b_{mij}u_{m}+b_{mi}u_{mj}-u_{ij} \\
&=b_{mij}u_{m}+b_{mi}b_{mj}-Kub_{ij}-b_{ij}+Ku\delta_{ij}.
\end{align*}
From Proposition \ref{Th:BasicSigmak}, we know
\begin{align*}
F^{ij}b_{ij}&=F\left( \frac{\sigma_{k}^{ij}b_{ij}}{\sigma_{k}}-\frac{\sigma_{l}^{ij}b_{ij}}{\sigma_{l}} \right)=(k-l)F, \\
F^{ij}\delta_{ij}&=F\left( \frac{\sigma_{k}^{ij}\delta_{ij}}{\sigma_{k}}-\frac{\sigma_{l}^{ij}\delta_{ij}}{\sigma_{l}} \right)=F\left( (n-k+1)\frac{\sigma_{k-1}}{\sigma_{k}}-(n-l+1)\frac{\sigma_{l-1}}{\sigma_{l}} \right), 
\end{align*}
and
\begin{align*}
F^{ij}b_{mi}b_{mj}&=F\left( \frac{\sigma_{k}^{ij}b_{mi}b_{mj}}{\sigma_{k}}-\frac{\sigma_{l}^{ij}b_{mi}b_{mj}}{\sigma_{l}} \right) \\
&=F\left( \sigma_{1}-(k+1)\frac{\sigma_{k+1}}{\sigma_{k}}-\sigma_{1}+\frac{(l+1)\sigma_{l+1}}{\sigma_{l}} \right) \\
&=F\left(-(k+1)\frac{\sigma_{k+1}}{\sigma_{k}}+\frac{(l+1)\sigma_{l+1}}{\sigma_{l}} \right).
\end{align*}
We also know $\nabla_{m}F=0$ since $F$ is constant.
Then
\begin{align*}
\frac{1}{2}F^{ij}\nabla^{2}_{ij}P&=u_{m}\nabla_{m}F+F^{ij}b_{mi}b_{mj}-KuF^{ij}b_{ij}-F^{ij}b_{ij}+KuF^{ij}\delta_{ij} \\
&=F\bigg( -(k+1)\frac{\sigma_{k+1}}{\sigma_{k}}+\frac{(l+1)\sigma_{l+1}}{\sigma_{l}}-(k-l)Ku-(k-l) \\
&\hspace{3em}+(n-k+1)Ku\frac{\sigma_{k-1}}{\sigma_{k}}-(n-l+1)Ku\frac{\sigma_{l-1}}{\sigma_{l}} \bigg) \\
&=F\bigg( -(k+1)\frac{\sigma_{k+1}}{\sigma_{k}}+\frac{(l+1)\sigma_{l+1}}{\sigma_{l}}-(k-l)\bigg) \\
&\quad +KuF\bigg(-(k-l)+(n-k+1)\frac{\sigma_{k-1}}{\sigma_{k}}-(n-l+1)\frac{\sigma_{l-1}}{\sigma_{l}} \bigg).
\end{align*}
From Lemma \ref{Th:IneqLem} and $Ku\geq 0$, we obtain $F^{ij}\nabla^{2}_{ij}P\geq 0$. 

Since $F^{ij}\nabla^{2}_{ij}$ is elliptic, by the strong maximum principle, if there exists $x_{0}\in \Omega$ such that $P(x_{0})=\max_{\overline{\Omega}}P$, then $P$ is constant. From boundary conditions, we know \[P=K^{3}c_{1}^{2}+c_{2}^{2}-2Kc_{1}\quad \text{in }~\overline{\Omega}. \] If $P(x)<P\vert_{\partial\Omega}$ for all $x\in \Omega$, then \[ P<K^{3}c_{1}^{2}+c_{2}^{2}-2Kc_{1}\quad \text{in }~\Omega. \]
\end{proof}

Now we introduce another auxiliary function
\begin{equation*}
\widetilde{P}:=
-g(\nabla u,\nabla \Phi)+u V+\Phi.
\end{equation*}
It is worth noting that in the case $K=1$ the function $\widetilde{P}$ can be written as $$\widetilde{P}=g(\nabla u,\nabla V)+uV-V,$$ which is the same as the second auxiliary function in \cite{Qiu-Xia}. From {Lemma 2.2} in \cite{Qiu-Xia}, it follows that  $\Delta\widetilde{P}=0$ for the solution $u$ to  $\Delta u+nKu=n$. For the Hessian quotient equations, we obtain the following lemma.

\begin{lemma}\label{Th:tildeP}
Let $u\in C^{3}(\Omega)$ be a solution to problem \eqref{Eq:hessian} in Theorem \ref{Th:QuoHess}, then \[ F^{ij}\nabla^{2}_{ij}\widetilde{P}\geq 0. \]
\end{lemma}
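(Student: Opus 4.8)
The strategy mirrors the computation already carried out for $P$ in Lemma \ref{Th:P}: I would compute $F^{ij}\nabla^2_{ij}\widetilde{P}$ directly in a local orthonormal frame, reduce everything to $\sigma$-quantities of $b=\nabla^2 u+Kug$, and then invoke Lemma \ref{Th:IneqLem} together with $Ku\geq 0$ and $V>0$ (which holds on $\Omega$ by the discussion after Proposition \ref{Th:PhiV}) to conclude nonnegativity. First I would assemble the first and second derivatives of the three pieces of $\widetilde{P}=-g(\nabla u,\nabla\Phi)+uV+\Phi$. Using Proposition \ref{Th:PhiV} ($\nabla^2\Phi=Vg$, $\nabla V=-K\nabla\Phi$, $\nabla^2 V=-KVg$) one gets
\begin{align*}
\nabla_i\widetilde{P}&=-u_{mi}\Phi_m-u_m\nabla^2_{im}\Phi+u_iV+u V_i+\Phi_i\\
&=-u_{mi}\Phi_m-Vu_i+u_iV+uV_i+\Phi_i=-u_{mi}\Phi_m+uV_i+\Phi_i,
\end{align*}
and differentiating once more, again using Proposition \ref{Th:PhiV} and the Codazzi property $b_{ijk}=b_{ikj}$ to commute derivatives of $u$, yields an expression for $\nabla^2_{ij}\widetilde{P}$ in terms of $u_{mij}$, $b_{ij}$, $g_{ij}$, $\Phi$ and $V$. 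Writing $u_{mij}=b_{mij}-Ku_m\delta_{ij}-K u_i\delta_{mj}$ (or the analogous identity relating third derivatives of $u$ to $\nabla b$), the third-order terms should collapse against $\nabla_m F=0$.

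\textbf{The contraction step.} Next I would contract with $F^{ij}$. The term $-F^{ij}u_{mij}\Phi_m$ reduces to $-\Phi_m\nabla_m(F^{ij}b_{ij})$-type expressions plus lower-order pieces; since $F^{ij}b_{ij}=(k-l)F$ is constant and $F^{ij}$ is divergence-free (the $k$-Hessian and hence the quotient operator annihilates divergences, by the Codazzi proposition), the genuinely third-order contribution vanishes. What remains is a combination of $F^{ij}g_{ij}$, $F^{ij}b_{ij}$, and $F^{ij}b_{mi}b_{mj}$ — exactly the three contractions already evaluated in the proof of Lemma \ref{Th:P} in terms of $\sigma_{k\pm1}/\sigma_k$ and $\sigma_{l\pm1}/\sigma_l$ — with coefficients that are now linear combinations of $V$, $uV$, $\Phi$ and $K$. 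The hoped-for identity is that $\Phi$ and $V$ enter only through the combination dictated by $-K\Phi = $ (something proportional to $V$ up to a constant), so that the $\Phi$-terms either cancel or recombine into a clean nonnegative expression; here the relation $-K\nabla\Phi=\nabla V$ and the fact that $\Phi$ and $V$ satisfy linear second-order equations are what make the bookkeeping close.

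\textbf{Concluding and the main obstacle.} Once the contraction is reduced to $F$ times a sum of terms of the form $\big(\text{const}+Ku\cdot\text{const}\big)\big(\tfrac{n-k}{k+1}-\tfrac{\sigma_{k+1}}{\sigma_k}\big)$, $\big(\tfrac{\sigma_{l+1}}{\sigma_l}-\tfrac{n-l}{l+1}\big)$, $\big(\tfrac{\sigma_{k-1}}{\sigma_k}-\tfrac{k}{n-k+1}\big)$, etc., each factored piece is nonnegative by Lemma \ref{Th:IneqLem}, the curvature/potential coefficients are nonnegative because $V>0$ on $\Omega$, $\Phi>0$, $Ku\geq 0$, and (for $K>0$) $\Omega$ sits in the hemisphere; this gives $F^{ij}\nabla^2_{ij}\widetilde{P}\geq 0$. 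I expect the main obstacle to be precisely the algebraic bookkeeping of the potential terms: verifying that after substituting $\nabla^2\Phi=Vg$ and $\nabla^2 V=-KVg$ the coefficients of $F^{ij}g_{ij}$ and $F^{ij}b_{ij}$ organize into sign-definite multiples of the Newton--MacLaurin gaps rather than leaving an indefinite leftover. If a stray indefinite term survives, the fix will be to use the equation $F=\binom{n}{k}/\binom{n}{l}$ itself (equivalently $\sigma_k/\sigma_l$ fixed) to trade one $\sigma$-ratio for another, exactly as in Lemma \ref{Th:IneqLem}, and to exploit $K^2 c_1\le 1$/$Ku\ge 0$ to pin down the sign on $\partial\Omega$ and in $\Omega$ respectively.
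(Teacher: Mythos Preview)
Your overall strategy matches the paper's, but you are anticipating complications that do not arise, and in doing so you mis-predict which hypotheses are actually used. The computation is much cleaner than the one for $P$: because $\widetilde{P}$ is \emph{linear} in $\nabla u$ (through $-u_m\Phi_m$) rather than quadratic, no $F^{ij}b_{mi}b_{mj}$ term ever appears. After your correct first-derivative formula $\nabla_i\widetilde{P}=-u_{mi}\Phi_m+uV_i+\Phi_i$, one more differentiation together with Proposition~\ref{Th:PhiV} gives
\[
\nabla^2_{ij}\widetilde{P}=-u_{mij}\Phi_m - Vu_{ij} - KVu\,\delta_{ij} - K\Phi_i u_j + V\delta_{ij},
\]
and the key observation (which dissolves your ``bookkeeping'' obstacle) is that the four non-identity terms regroup \emph{exactly} into $b$-quantities: $-u_{mij}\Phi_m - K\Phi_i u_j = -(u_{mi}+Ku\,\delta_{mi})_j\Phi_m=-b_{mij}\Phi_m$ and $-Vu_{ij}-KVu\,\delta_{ij}=-Vb_{ij}$, so
\[
\nabla^2_{ij}\widetilde{P}=-b_{mij}\Phi_m - Vb_{ij} + V\delta_{ij}.
\]
Contracting with $F^{ij}$, the Codazzi symmetry gives $F^{ij}b_{mij}=F^{ij}b_{ijm}=\nabla_m F=0$, and what remains is simply
\[
F^{ij}\nabla^2_{ij}\widetilde{P}=VF\Big((n-k+1)\tfrac{\sigma_{k-1}}{\sigma_k}-(n-l+1)\tfrac{\sigma_{l-1}}{\sigma_l}-(k-l)\Big)\ge 0
\]
by Lemma~\ref{Th:IneqLem} and $V>0$ alone. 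In particular neither $Ku\geq 0$ nor $K^2c_1\leq 1$ is needed here (the former enters in Lemma~\ref{Th:P}, not this lemma), and the $\sigma_{k+1}/\sigma_k$ and $\sigma_{l+1}/\sigma_l$ gaps you list never appear. Note also that your identity $u_{mij}=b_{mij}-Ku_m\delta_{ij}-Ku_i\delta_{mj}$ is off: differentiating $b_{mi}=u_{mi}+Ku\,\delta_{mi}$ gives only $u_{mij}=b_{mij}-Ku_j\,\delta_{mi}$.
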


\begin{proof}
Using Proposition \ref{Th:PhiV}, by direct computation, we have
\begin{align*}
\nabla_{i}\widetilde{P}&=-u_{mi}\Phi_{m}-u_{m}\Phi_{mi}+V_{i}u+Vu_{i}+\Phi_{i}\\
&=-u_{mi}\Phi_{m}+V_{i}u+\Phi_{i}
\end{align*}
and
\begin{align*}
\nabla^{2}_{ij}\widetilde{P}&=-u_{mij}\Phi_{m}-u_{mj}\Phi_{mi}+V_{ij}u+V_{i}u_{j}+\Phi_{ij}\\
&=-u_{mij}\Phi_{m}-u_{mj}\delta_{mi}V-KVu\delta_{ij}-K\Phi_{i}u_{j}+V\delta_{ij} \\
&=-(u_{mi}+Ku\delta_{mi})_{j}\Phi_{m}-V(u_{ij}+Ku\delta_{ij})+V\delta_{ij}\\
&=-b_{mij}\Phi_{m}-Vb_{ij}+V\delta_{ij}.
\end{align*}
We also know $\nabla_{m}F=0$ since $F$ is constant.
Combining Proposition \ref{Th:BasicSigmak}, we obtain
\begin{align*}
F^{ij}\nabla^{2}_{ij}\widetilde{P}&=-\Phi_{m}\nabla_{m}F-(k-l)VF \\
&\quad +VF\left((n-k+1)\frac{\sigma_{k-1}}{\sigma_{k}}-(n-l+1)\frac{\sigma_{l-1}}{\sigma_{l}} \right) \\
&=VF\left((n-k+1)\frac{\sigma_{k-1}}{\sigma_{k}}-(n-l+1)\frac{\sigma_{l-1}}{\sigma_{l}}-(k-l) \right).
\end{align*} 
Then we know $F^{ij}\nabla^{2}_{ij}\widetilde{P}\geq 0$ from Lemma \ref{Th:IneqLem} and $V>0$.

\end{proof}

\section{Proof of Theorem \ref{Th:QuoHess}}
\label{Sec:ProofQuoHess}

From Lemma \ref{Th:P}, we know either \[P=K^{3}c_{1}^{2}+c_{2}^{2}-2Kc_{1}\quad \text{in }~\overline{\Omega} \] or \[ P<K^{3}c_{1}^{2}+c_{2}^{2}-2Kc_{1}\quad \text{in }~\Omega. \]

If the former occurs, namely $P$ is constant, calculations in the proof of Lemma \ref{Th:P} and the Newton-MacLaurin inequality imply $\nabla^{2}u+Kug=a(x)g$ for some function $a$. From the equation \[ \dfrac{\sigma_{k}(\nabla^{2}u+Kug)}{\sigma_{l}(\nabla^{2}u+Kug)}=\dfrac{\binom{n}{k}}{\binom{n}{l}}, \] we know $a(x)\equiv 1$. Hence $\nabla^{2}u+Kug=g$.

If the latter occurs, the Hopf Lemma implies
\begin{equation}\label{Eq:0<Pnu}
0<\nabla_{\nu}P(y)=2u_{\nu}u_{\nu\nu}+2Kuu_{\nu}-2u_{\nu}=2c_{2}(u_{\nu\nu}+K^{2}c_{1}-1)
\end{equation}
for any $y\in\partial\Omega$.

From Lemma \ref{Th:tildeP}, we know $F^{ij}\nabla^{2}_{ij}\widetilde{P}\geq 0$. We claim that $\widetilde{P}$ must be constant. Otherwise, by the strong maximum principle, there exists $y_{0}\in \partial\Omega$ such that $\widetilde{P}(x)<\widetilde{P}(y_{0})$ for any $x\in\Omega$. From Hopf Lemma, we know
\begin{align*}
\nabla_{\nu}\widetilde{P}(y_{0})>0.
\end{align*}
Since
\begin{align*}
\nabla_{\nu}\widetilde{P}&=-u_{\nu\nu}\Phi_{\nu}-u_{\nu}\Phi_{\nu\nu}+u_{\nu}V+uV_{\nu}+\Phi_{\nu} \\
&=-u_{\nu\nu}\Phi_{\nu}-u_{\nu}V+u_{\nu}V-Ku\Phi_{\nu}+\Phi_{\nu} \\
&=-\Phi_{\nu}(u_{\nu\nu}+Ku-1),
\end{align*}
we obtain 
\begin{equation}
0<\nabla_{\nu}\widetilde{P}(y_{0})=-\Phi_{\nu}(u_{\nu\nu}+K^{2}c_{1}-1).
\end{equation}
On the other hand, since the warping function $f(r)$ is positive and the domain $\Omega$ is star-shaped, we obtain
\begin{equation*}
\Phi_{\nu}=g(\nabla\Phi,\nu)=f(r)g(\frac{\partial}{\partial r},\nu)>0.
\end{equation*}
 This implies \[ u_{\nu\nu}+K^{2}c_{1}-1< 0. \] It contradicts \eqref{Eq:0<Pnu}. 

Now, since $\widetilde{P}$ is constant, calculations in the proof of Lemma \ref{Th:tildeP} and the Newton-MacLaurin inequality imply $\nabla^{2}u+Kug=a(x)g$ for some function $a$. As before, we obtain $\nabla^{2}u+Kug=g$.

At last, we show that $\nabla^{2}u+Kug=g$ implies $u$ is radially symmetric. The boundary condition $u\vert_{\partial\Omega}=Kc_{1}$ makes the discussion  different from \cite{Reilly80, Ciraolo-Vezzoni19}.

Since $u_{\nu}=c_{2}>0$ on $\partial\Omega$, we know there exists $p\in \Omega
$ such that $\displaystyle u(p)=\min_{\overline{\Omega}}u$. Let $\gamma: I\rightarrow \Omega$ be a maximal geodesic of arc-length parameter satisfying $\gamma(0)=p$. We consider $v(s):=u(\gamma(s))$, then
\begin{equation}\label{v''}
\left\{  
\begin{array}{lr} 
v''+Kv=1,\\  
v'(0)=0,\\
v(0)=u(p).
\end{array}
\right.
\end{equation}
Given two different such geodesics $\gamma_{1}$ and $\gamma_{2}$, by the existence and the uniqueness of the solution to the ODE, we have $u(\gamma_{1}(s))=u(\gamma_{2}(s))$,
hence $u$ is a radially symmetric function in the geodesic ball $B_{R}(p)$ where $R$ is chosen such that $\partial B_{R}(p)$ touches the $\partial \Omega,$ so $u=Kc_{1}$ on $\partial B_{R}(p)$. 

From the boundary condition of problem (\ref{Eq:hessian}), we know $\partial \Omega\subset\{u=Kc_{1}\}$. We need to show $\partial \Omega=\{u=Kc_{1}\}$. Here we use some auxiliary functions. Firstly we consider $\bar{u}:=u-Kc_{1}$, then $\nabla^{2}u+Kug=g$ implies 
\begin{align*}
\Delta \bar{u}+nK\bar{u}=\Delta u+nKu-nK^{2}c_{1}=n-nK^{2}c_{1}.
\end{align*}
 
Since $V$ is positive in $\Omega \subset M^{n}(K)$, we can set $w:=\dfrac{\bar{u}}{V}$. And by direct computation, we find that
\begin{align*}
w_{i}&=\frac{\bar{u}_{i}{V}-V_{i}\bar{u}}{V^{2}},\\
w_{ij}&=\frac{1}{V^{4}}\big((\bar{u}_{ij}V-V_{ij}\bar{u}+\bar{u}_{i}V_{j}-\bar{u}_{j}V_{i})V^{2}-2V V_{j}(\bar{u}_{i}V-V_{i}\bar{u})\big),\\
\Delta w&=\frac{1}{V}(\Delta \bar{u}+nK\bar{u})-\frac{2}{V}\langle \nabla V,\nabla w \rangle.
\end{align*}
Thus,
\begin{equation}\label{w eq}
\Delta w+\frac{2}{V}\langle \nabla V,\nabla w \rangle=\frac{n(1-K^{2}c_{1})}{V}.
\end{equation}

We claim that $K^{2}c_{1}<1$. Otherwise, the right hand side of \eqref{w eq} is non-positive. Thus by the strong maximum principle and $w\vert_{\partial{\Omega}}=0$, the function $w$ is either positive in $\Omega$ or $w\equiv0$ in $\Omega$. The latter cannot occur since it leads to $u\equiv Kc_{1}$ in $\Omega$ which contradicts $u_{\nu}\vert_{\partial{\Omega}}=c_{2}>0$. 
By the Hopf lemma, the former implies $w_ \nu<0$ on $\partial\Omega$ which contradicts $w_{\nu}=\frac{\bar{u}_{\nu}}{V}=\frac{u_{\nu}}{V}>0.$

Now, applying the strong maximum principle to \eqref{w eq}, we know that the function $w$ is negative in $\Omega$. Hence, $u<Kc_{1}$ in $\Omega$ which means $\partial \Omega=\{u=Kc_{1}\}$. As a result, $\partial \Omega=\partial B_{R}(p)$. Thus, we finish the proof.

\section{Rellich-Poho\v{z}aev type identity}
\label{Sec:R-Pidentity}

In the following, $\sigma_{k}=\sigma_{k}(b)$ for convenience. 
\begin{lemma}\label{Th:Poho}
Let $u\in C^{3}(\Omega)\cap C^{2}(\overline{\Omega})$ be a solution to the problem in Theorem \ref{Th:Hess}. Then
\begin{align*}
k\binom{n}{k}\int_{\Omega}uV \mathrm{d}\mu&=c_{1}K\int_{\partial\Omega}\sigma_{k}^{ij}u_{il}\Phi_{l}\nu_{j}\mathrm{d}S+\frac{c_{1}^{2}}{2}K^{3}\int_{\partial\Omega}\sigma_{k}^{ij}\Phi_{i}\nu_{j}\mathrm{d}S \\
&\quad +\frac{n-k+1}{2}K\int_{\Omega}\sigma_{k-1}u^{2}V\mathrm{d}\mu \\
&\quad -\frac{c_{2}^{2}}{2}\int_{\partial\Omega}\sigma_{k}^{li}\Phi_{l}\nu_{i}\mathrm{d}S+\frac{n-k+1}{2}\int_{\Omega}\sigma_{k-1}\abs{\nabla u}^{2}V\mathrm{d}\mu.
\end{align*}
\end{lemma}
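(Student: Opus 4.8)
The plan is to establish this Rellich--Poho\v{z}aev identity through a double integration by parts, starting from the divergence theorem applied to a vector field built from the Newton tensor $\sigma_k^{ij}=\sigma_k^{ij}(b)$, the Codazzi tensor $b=\nabla^2u+Kug$, and the conformal potential $\Phi$. Concretely, I would begin with the vector field $X^j:=u\,\sigma_k^{ij}\,b_{il}\,\Phi_l$ and write
\[
\int_{\partial\Omega}u\,\sigma_k^{ij}b_{il}\Phi_l\,\nu_j\,\mathrm{d}S=\int_\Omega\nabla_j\!\big(u\,\sigma_k^{ij}b_{il}\Phi_l\big)\,\mathrm{d}\mu .
\]
Expanding the right-hand side, the term in which $\nabla_j$ falls on $\sigma_k^{ij}$ vanishes because $\sum_j\nabla_j\sigma_k^{ij}=0$; the term in which $\nabla_j$ falls on $b_{il}$ equals $u\,\sigma_k^{ij}b_{ilj}\Phi_l$, which by the Codazzi property $b_{ilj}=b_{ijl}$ equals $u\,\Phi_l\,\nabla_l\sigma_k(b)=0$ since $\sigma_k(b)\equiv\binom nk$; and the term in which $\nabla_j$ falls on $\Phi_l$ equals, by $\nabla^2\Phi=Vg$ from Proposition \ref{Th:PhiV} and $\sigma_k^{ij}b_{ij}=k\sigma_k$ from Proposition \ref{Th:BasicSigmak}, exactly $k\binom nk\,uV$. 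Hence
\[
\int_{\partial\Omega}u\,\sigma_k^{ij}b_{il}\Phi_l\,\nu_j\,\mathrm{d}S=k\binom nk\int_\Omega uV\,\mathrm{d}\mu+\int_\Omega u_j\,\sigma_k^{ij}b_{il}\Phi_l\,\mathrm{d}\mu .
\]

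The key algebraic step is to rewrite the bulk term $\int_\Omega u_j\sigma_k^{ij}b_{il}\Phi_l\,\mathrm{d}\mu$. By Proposition \ref{Th:BasicSigmak}(iii) the Newton tensor $\big(\sigma_k^{ij}\big)$ is a polynomial in $b$ and therefore commutes with $(b_{ij})$, so $\sum_i\sigma_k^{ij}b_{il}$ is symmetric in $j$ and $l$; contracting this symmetric matrix against $u_j\Phi_l$ I may interchange the roles of $u$ and $\Phi$, which gives $u_j\sigma_k^{ij}b_{il}\Phi_l=\sigma_k^{ij}\Phi_j\,b_{il}u_l=\tfrac12\,\sigma_k^{ij}\Phi_j\,\nabla_iQ$, where $Q:=\abs{\nabla u}^2+Ku^2$ satisfies $\nabla_iQ=2b_{il}u_l$. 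Then a second integration by parts, using again $\nabla_i\sigma_k^{ij}=0$ together with $\sigma_k^{ij}\Phi_{ij}=V\sigma_k^{ij}\delta_{ij}=(n-k+1)V\sigma_{k-1}$, yields
\[
\tfrac12\int_\Omega\sigma_k^{ij}\Phi_j\nabla_iQ\,\mathrm{d}\mu=\tfrac12\int_{\partial\Omega}Q\,\sigma_k^{ij}\Phi_j\nu_i\,\mathrm{d}S-\frac{n-k+1}{2}\int_\Omega Q\,\sigma_{k-1}V\,\mathrm{d}\mu .
\]

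Finally I would substitute the overdetermined data on $\partial\Omega$, where $u=Kc_1$ and $\nabla u=c_2\nu$, so that $\abs{\nabla u}^2=c_2^2$ and $Q=c_2^2+K^3c_1^2$ there. Using $u=Kc_1$ and $b_{il}=u_{il}+Ku\delta_{il}$ in the first boundary integral splits it into the $c_1K$ and $\tfrac12c_1^2K^3$ terms; splitting $Q=\abs{\nabla u}^2+Ku^2$ in the interior integral produces the two bulk terms $\tfrac{n-k+1}{2}\int_\Omega\sigma_{k-1}\abs{\nabla u}^2V$ and $\tfrac{n-k+1}{2}K\int_\Omega\sigma_{k-1}u^2V$; and the boundary $Q$-integral produces the $-\tfrac{c_2^2}{2}$ and $+\tfrac{c_1^2}{2}K^3$ boundary contributions (using the symmetry of $\sigma_k^{ij}$ to identify $\sigma_k^{ij}\Phi_j\nu_i$ with $\sigma_k^{ij}\Phi_i\nu_j=\sigma_k^{li}\Phi_l\nu_i$). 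Collecting all the pieces yields exactly the stated identity.

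I expect the main obstacle to be twofold. First, one must locate the correct vector field $u\,\sigma_k^{ij}b_{il}\Phi_l$ and the auxiliary function $Q=\abs{\nabla u}^2+Ku^2$ whose gradient is $2b(\nabla u,\cdot)$; the symmetrization via commutativity of the Newton tensor with $b$ is the one algebraic observation that lets the two bulk terms collapse cleanly. Second, there is the bookkeeping of the boundary terms, and this is precisely where the nonzero Dirichlet datum $u|_{\partial\Omega}=Kc_1$ enters: in the zero-boundary setting of \cite{Gao-Jia-Yan} the factor $u$ in $X^j$ simply drops out on $\partial\Omega$, whereas here it survives and generates the new $c_1K$ and $c_1^2K^3$ boundary terms appearing in the identity.
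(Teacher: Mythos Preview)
Your argument is correct and follows essentially the same route as the paper's proof: a double integration by parts exploiting $\nabla_j\sigma_k^{ij}=0$, the Codazzi property of $b$, $\nabla^2\Phi=Vg$, and the commutation of the Newton tensor with $b$ to swap the roles of $\nabla u$ and $\nabla\Phi$. Your organization is marginally more streamlined because you work with $b$ and $Q=\abs{\nabla u}^2+Ku^2$ from the outset, whereas the paper splits $b_{ij}=u_{ij}+Ku\delta_{ij}$ early and consequently needs an extra integration by parts to handle the residual $-K\sigma_k^{ij}u_j\Phi_i u$ term coming from the Codazzi identity applied to $u_{ilj}$ rather than $b_{ilj}$. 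One small bookkeeping correction in your final paragraph: the first boundary integral actually contributes a full $c_1^2K^3\int_{\partial\Omega}\sigma_k^{ij}\Phi_i\nu_j$ (not half), while the $Q$-boundary term contributes $-\tfrac12 c_1^2K^3$ (not $+$); these combine to the $\tfrac12 c_1^2K^3$ coefficient in the stated identity.
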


\begin{proof}
From
\begin{align*}
k\sigma_{k}=\sigma_{k}^{ij}b_{ij} \qquad\text{ and }\qquad  \sigma_{k}=\binom{n}{k},
\end{align*}
we have
\begin{align*}
k\binom{n}{k}uV=\sigma_{k}^{ij}b_{ij}uV.
\end{align*}
By $b_{ij}=u_{ij}+Ku\delta_{ij}$ and $\sigma_{k}^{ij}\delta_{ij}=(n-k+1)\sigma_{k-1}$, we know
\begin{equation}\label{Eq:I}
\begin{aligned}
k\binom{n}{k}uV&=\sigma_{k}^{ij}(u_{ij}+Ku\delta_{ij})uV \\
&=\sigma_{k}^{ij}u_{ij}uV+(n-k+1)K\sigma_{k-1}u^{2}V.
\end{aligned}
\end{equation}

Using $\Phi_{ij}=V\delta_{ij}$ and $\nabla_{j}\sigma_{k}^{ij}=0$, we notice
\begin{equation}\label{Eq:I1}
\begin{aligned}
\sigma_{k}^{ij}u_{ij}uV&=\sigma_{k}^{ij}u_{il}u\Phi_{jl}\\
&=(\sigma_{k}^{ij}u_{il}u\Phi_{l})_{j}-\sigma_{k}^{ij}u_{ilj}u\Phi_{l}-\sigma_{k}^{ij}u_{il}u_{j}\Phi_{l}.
\end{aligned}
\end{equation}

Since $b_{ijk}=b_{ikj}$, $b_{ijk}=u_{ijk}+Ku_{k}\delta_{ij}$ and $\sigma_{k}=\binom{n}{k}$,
\begin{align*}
\sigma_{k}^{ij}u_{ilj}\Phi_{l}&=\sigma_{k}^{ij}b_{ilj}\Phi_{l}-K\sigma_{k}^{ij}u_{j}\delta_{il}\Phi_{l} \\
&=\nabla_{l}\sigma_{k}\Phi_{l}-K\sigma_{k}^{ij}u_{j}\Phi_{i} \\
&=-K\sigma_{k}^{ij}u_{j}\Phi_{i}.
        \end{align*}
Thus,
\begin{align*}
\sigma_{k}^{ij}u_{ilj}u\Phi_{l}&
=-\frac{1}{2}K\sigma_{k}^{ij}\Phi_{i}(u^{2})_{j} \\
&=-\frac{1}{2}K(\sigma_{k}^{ij}\Phi_{i}u^{2})_{j}+\frac{1}{2}K\sigma_{k}^{ij}\Phi_{ij}u^{2}.
\end{align*}
By \[ \sigma_{k}^{ij}\Phi_{ij}=V\sigma_{k}^{ij}\delta_{ij}=(n-k+1)\sigma_{k-1}V, \] we obtain
\begin{equation}\label{Eq:I11}
\sigma_{k}^{ij}u_{ilj}u\Phi_{l}=-\frac{1}{2}K(\sigma_{k}^{ij}\Phi_{i}u^{2})_{j}+\dfrac{n-k+1}{2}K\sigma_{k-1}u^{2}V.
\end{equation}
Since $u_{ij}$ and $b_{ij}$ can be diagonalized at the same time,
\begin{align*}
\sigma_{k}^{ij}u_{il}u_{j}\Phi_{l}=\sigma_{k}^{li}u_{ij}u_{j}\Phi_{l}.
\end{align*}
From
\begin{align*}
2\sigma_{k}^{li}u_{ij}u_{j}\Phi_{l}&=(\sigma_{k}^{li}\abs{\nabla u}^{2}\Phi_{l})_{i}-\sigma_{k}^{li}\abs{\nabla u}^{2}\Phi_{li} \\
&=(\sigma_{k}^{li}\abs{\nabla u}^{2}\Phi_{l})_{i}-(n-k+1)\sigma_{k-1}\abs{\nabla u}^{2}V,
\end{align*}
we obtain
\begin{equation}\label{Eq:I12}
\sigma_{k}^{ij}u_{il}u_{j}\Phi_{l}=\frac{1}{2}(\sigma_{k}^{li}\abs{\nabla u}^{2}\Phi_{l})_{i}-\frac{n-k+1}{2}\sigma_{k-1}\abs{\nabla u}^{2}V.
\end{equation}

Substituting \eqref{Eq:I11} and \eqref{Eq:I12} into \eqref{Eq:I1}, we get
\begin{align*}
\sigma_{k}^{ij}u_{ij}uV&=(\sigma_{k}^{ij}u_{il}u\Phi_{l})_{j}+\frac{1}{2}K(\sigma_{k}^{ij}\Phi_{i}u^{2})_{j}-\frac{n-k+1}{2}K\sigma_{k-1}u^{2}V \\
&\quad -\frac{1}{2}(\sigma_{k}^{li}\abs{\nabla u}^{2}\Phi_{l})_{i}+\frac{n-k+1}{2}\sigma_{k-1}\abs{\nabla u}^{2}V.
\end{align*}
Furthermore, it follows from \eqref{Eq:I} that
\begin{align*}
k\binom{n}{k}uV&=(\sigma_{k}^{ij}u_{il}u\Phi_{l})_{j}+\frac{1}{2}K(\sigma_{k}^{ij}\Phi_{i}u^{2})_{j} +\frac{n-k+1}{2}K\sigma_{k-1}u^{2}V \\
&\quad -\frac{1}{2}(\sigma_{k}^{li}\abs{\nabla u}^{2}\Phi_{l})_{i}+\frac{n-k+1}{2}\sigma_{k-1}\abs{\nabla u}^{2}V.
\end{align*}

Integrating and using the divergence theorem, we obtain
\begin{align*}
k\binom{n}{k}\int_{\Omega}uV \mathrm{d}\mu&=\int_{\partial\Omega}\sigma_{k}^{ij}u_{il}u\Phi_{l}\nu_{j}\mathrm{d}S+\frac{1}{2}K\int_{\partial\Omega}\sigma_{k}^{ij}\Phi_{i}u^{2}\nu_{j}\mathrm{d}S \\
&\quad +\frac{n-k+1}{2}K\int_{\Omega}\sigma_{k-1}u^{2}V\mathrm{d}\mu \\
&\quad -\frac{1}{2}\int_{\partial\Omega}\sigma_{k}^{li}\abs{\nabla u}^{2}\Phi_{l}\nu_{i}\mathrm{d}S+\frac{n-k+1}{2}\int_{\Omega}\sigma_{k-1}\abs{\nabla u}^{2}V\mathrm{d}\mu.
\end{align*}

By substituting the boundary conditions  $u\vert_{\partial\Omega}=Kc_{1}$ and $\abs{\nabla u}^{2}\vert_{\partial\Omega}=c_{2}^{2}$, we obtain the desired identity.
\end{proof}

In order to deal with the integral terms on the boundary $\partial\Omega$, we need to show the following lemma. 

\begin{lemma}\label{Th:bdryterm}
Under the same assumption of Lemma \ref{Th:Poho}, the following identities hold
\begin{itemize}
\item[i)] $\displaystyle \int_{\partial\Omega}\sigma_{k}^{ij}\Phi_{i}\nu_{j} \,\mathrm{d}S=(n-k+1)\int_{\Omega}\sigma_{k-1}V \,\mathrm{d}\mu,$
\item[ii)] $\displaystyle \int_{\partial\Omega}\sigma_{k}^{ij}u_{il}\Phi_{l}\nu_{j} \,\mathrm{d}S=\int_{\Omega} \left(k\binom{n}{k} -K^{2}c_{1}(n-k+1)\sigma_{k-1}\right)V\, \mathrm{d}\mu$.
\end{itemize}
\end{lemma}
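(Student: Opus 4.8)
The plan is to prove both identities by the divergence theorem, using repeatedly the divergence-free property $\nabla_j\sigma_k^{ij}=0$, the identity $\Phi_{ij}=V\delta_{ij}$ from Proposition \ref{Th:PhiV}, and the trace formula $\sigma_k^{ij}\delta_{ij}=(n-k+1)\sigma_{k-1}$ from Proposition \ref{Th:BasicSigmak}.

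For i), I would observe that $\nabla_j(\sigma_k^{ij}\Phi_i)=(\nabla_j\sigma_k^{ij})\Phi_i+\sigma_k^{ij}\Phi_{ij}=\sigma_k^{ij}V\delta_{ij}=(n-k+1)\sigma_{k-1}V$ in $\Omega$. Integrating this over $\Omega$ and applying the divergence theorem gives i) immediately; this part is essentially a one-line computation.

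For ii), the plan is to expand $\nabla_j(\sigma_k^{ij}u_{il}\Phi_l)=\sigma_k^{ij}u_{ilj}\Phi_l+\sigma_k^{ij}u_{il}\Phi_{lj}$, again dropping the term with $\nabla_j\sigma_k^{ij}=0$. The second summand equals $V\sigma_k^{ij}u_{ij}=V\bigl(k\sigma_k-Ku(n-k+1)\sigma_{k-1}\bigr)=V\bigl(k\binom{n}{k}-Ku(n-k+1)\sigma_{k-1}\bigr)$, using $b_{ij}=u_{ij}+Ku\delta_{ij}$ and Proposition \ref{Th:BasicSigmak}. For the first summand I would reuse the computation already performed in the proof of Lemma \ref{Th:Poho}: the Codazzi property $b_{ijk}=b_{ikj}$, together with $b_{ijk}=u_{ijk}+Ku_k\delta_{ij}$ and $\nabla_l\sigma_k=0$, gives $\sigma_k^{ij}u_{ilj}\Phi_l=-K\sigma_k^{ij}u_j\Phi_i$. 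Then a second integration by parts, $-K\sigma_k^{ij}u_j\Phi_i=-K\nabla_j(\sigma_k^{ij}u\Phi_i)+Ku\,\sigma_k^{ij}\Phi_{ij}=-K\nabla_j(\sigma_k^{ij}u\Phi_i)+Ku(n-k+1)\sigma_{k-1}V$, turns its integral into a boundary contribution plus $K(n-k+1)\int_\Omega u\sigma_{k-1}V\,\mathrm{d}\mu$. On $\partial\Omega$ one has $u=Kc_1$, so the boundary term is $-K^2c_1\int_{\partial\Omega}\sigma_k^{ij}\Phi_i\nu_j\,\mathrm{d}S$, which by part i) equals $-K^2c_1(n-k+1)\int_\Omega\sigma_{k-1}V\,\mathrm{d}\mu$.

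Collecting all terms, the two contributions $\pm Ku(n-k+1)\sigma_{k-1}V$ cancel, leaving exactly $\int_\Omega\bigl(k\binom{n}{k}-K^2c_1(n-k+1)\sigma_{k-1}\bigr)V\,\mathrm{d}\mu$, as claimed. The only step requiring genuine care is the manipulation of the third-derivative term $\sigma_k^{ij}u_{ilj}\Phi_l$, where one must commute derivatives via the Codazzi equation for $b$ and correctly pick up the curvature correction $-Ku_j\delta_{il}$; since this is identical to the corresponding step in Lemma \ref{Th:Poho}, the remaining work is just the bookkeeping of the two integrations by parts, substituting the boundary values $u|_{\partial\Omega}=Kc_1$, and invoking part i).
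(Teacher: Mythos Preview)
Your proof is correct and follows essentially the same approach as the paper: divergence theorem combined with $\nabla_j\sigma_k^{ij}=0$, $\Phi_{ij}=V\delta_{ij}$, the Codazzi property of $b$, and the boundary value $u|_{\partial\Omega}=Kc_1$. The only difference is that for ii) the paper first rewrites $u_{il}=b_{il}-Ku\delta_{il}$ on $\partial\Omega$, so that the third-derivative term becomes $\sigma_k^{ij}b_{ilj}\Phi_l=\nabla_l\sigma_k\cdot\Phi_l=0$ directly and the boundary condition is applied immediately; this avoids your second integration by parts and the subsequent cancellation of the $\pm Ku(n-k+1)\sigma_{k-1}V$ terms, but the content is the same.
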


\begin{proof}
By the divergence theorem, $\Phi_{ij}=V\delta_{ij}$ and $\sigma_{k}^{ij}\delta_{ij}=(n-k+1)\sigma_{k-1}$, we have
\begin{equation*}
\int_{\partial\Omega}\sigma_{k}^{ij}\Phi_{i}\nu_{j} \,\mathrm{d}S=\int_{\Omega}(\sigma_{k}^{ij}\Phi_{i})_{j} \,\mathrm{d}\mu=(n-k+1)\int_{\Omega}\sigma_{k-1}V \,\mathrm{d}\mu.
\end{equation*}

Similarly, using the conditions
$u\vert_{\partial\Omega}=Kc_{1}\text{ and }\sigma_{k}=\binom{n}{k}$, 
we get
\begin{align*}
\int_{\partial\Omega}\sigma_{k}^{ij}u_{il}\Phi_{l}\nu_{j} \,\mathrm{d}S&=\int_{\partial\Omega}(\sigma_{k}^{ij}b_{il}\Phi_{l}\nu_{j}-K\sigma^{ij}_{k}u\delta_{il}\Phi_{l}\nu_{j}) \,\mathrm{d}S \\
&=\int_{\Omega}\sigma_{k}^{ij}b_{il}\Phi_{lj}\,\mathrm{d}\mu -K^{2}c_{1}\int_{\partial\Omega}\sigma_{k}^{ij}\Phi_{i}\nu_{j} \,\mathrm{d}S \\
&=\int_{\Omega}(\sigma_{k}^{ij}b_{ij}V-K^{2}c_{1}\sigma_{k}^{ij}\Phi_{ij})\,\mathrm{d}\mu.
\end{align*}
Thus, from the equalities
\begin{equation*}
\sigma_k^{ij}b_{ij}=k\sigma_{k}=k\binom{n}{k}
\end{equation*}
and
 \begin{equation*}
\sigma_k^{ij}\Phi_{ij}=\sigma_k^{ij}\delta_{ij}V=(n-k+1)\sigma_{k-1}V,
\end{equation*}
the second identity follows.

\end{proof}

Combining Lemma \ref{Th:Poho} and Lemma \ref{Th:bdryterm}, we obtain the following lemma.

\begin{lemma}\label{Th:intP}
Let $u\in C^{3}(\Omega)\cap C^{2}(\overline{\Omega})$ be a solution to the problem in Theorem \ref{Th:Hess}. Then
\begin{equation}\label{Eq:intuV}
\begin{aligned}
&\binom{n}{k-1}\int_{\Omega}(u-Kc_{1})V \,\mathrm{d}\mu \\
&\qquad=\frac{1}{2}\int_{\Omega}(\abs{\nabla u}^{2}+Ku^{2}-K^{3}c_{1}^{2}-c_{2}^{2})\sigma_{k-1}V \,\mathrm{d}\mu.
\end{aligned}
\end{equation}
\end{lemma}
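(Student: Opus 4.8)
The plan is to combine the Rellich–Pohožaev identity of Lemma \ref{Th:Poho} with the two boundary identities of Lemma \ref{Th:bdryterm}, using the relations among binomial coefficients to collapse everything into a single volume integral. First I would take the conclusion of Lemma \ref{Th:Poho} and substitute the three relevant boundary terms. The term $\int_{\partial\Omega}\sigma_k^{ij}\Phi_i\nu_j\,\mathrm{d}S$ is replaced by $(n-k+1)\int_\Omega\sigma_{k-1}V\,\mathrm{d}\mu$ using part i), and the term $\int_{\partial\Omega}\sigma_k^{ij}u_{il}\Phi_l\nu_j\,\mathrm{d}S$ is replaced by $\int_\Omega\big(k\binom{n}{k}-K^2c_1(n-k+1)\sigma_{k-1}\big)V\,\mathrm{d}\mu$ using part ii). The remaining boundary term is $-\tfrac{c_2^2}{2}\int_{\partial\Omega}\sigma_k^{li}\Phi_l\nu_i\,\mathrm{d}S$, which by part i) (relabelled indices) is $-\tfrac{c_2^2}{2}(n-k+1)\int_\Omega\sigma_{k-1}V\,\mathrm{d}\mu$.

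After these substitutions the right-hand side becomes a sum of volume integrals only. Explicitly, the $c_1K$-times-ii) contribution gives $c_1 K\int_\Omega k\binom{n}{k}V\,\mathrm{d}\mu - K^3c_1^2(n-k+1)\int_\Omega\sigma_{k-1}V\,\mathrm{d}\mu$; the $\tfrac{c_1^2}{2}K^3$-times-i) contribution gives $\tfrac{c_1^2}{2}K^3(n-k+1)\int_\Omega\sigma_{k-1}V\,\mathrm{d}\mu$; and these two $\sigma_{k-1}$-pieces combine with the $-\tfrac{c_2^2}{2}(n-k+1)$ piece and with the explicit $\tfrac{n-k+1}{2}K\int_\Omega\sigma_{k-1}u^2V\,\mathrm{d}\mu$ and $\tfrac{n-k+1}{2}\int_\Omega\sigma_{k-1}\abs{\nabla u}^2V\,\mathrm{d}\mu$ terms already present in Lemma \ref{Th:Poho}. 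Moving the $k\binom{n}{k}c_1 K\int_\Omega V\,\mathrm{d}\mu$ term to the left and recalling that $k\binom{n}{k}\int_\Omega uV\,\mathrm{d}\mu$ is the left side of Lemma \ref{Th:Poho}, the left side becomes $k\binom{n}{k}\int_\Omega(u-Kc_1)V\,\mathrm{d}\mu$. On the right side, collecting the coefficient of $\tfrac{n-k+1}{2}\int_\Omega\sigma_{k-1}V\,\mathrm{d}\mu$ shows the pure-$\sigma_{k-1}$ terms assemble exactly into $\tfrac{n-k+1}{2}\int_\Omega(\abs{\nabla u}^2+Ku^2-K^3c_1^2-c_2^2)\sigma_{k-1}V\,\mathrm{d}\mu$, where the $-K^3c_1^2$ comes from $-K^3c_1^2+\tfrac12K^3c_1^2$ being off — so I would double-check: the two $c_1$-dependent $\sigma_{k-1}$ contributions are $-K^3c_1^2(n-k+1)$ and $+\tfrac12 K^3c_1^2(n-k+1)$, summing to $-\tfrac12 K^3c_1^2(n-k+1)$, which is exactly the $-\tfrac{n-k+1}{2}K^3c_1^2$ bookkeeping needed. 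Finally dividing both sides by $k$ and using $\tfrac{1}{k}\binom{n}{k}=\tfrac1k\binom{n}{k}$ together with $\tfrac{n-k+1}{k}\binom{n}{k-1}\cdot\frac{1}{?}$ — more directly, $\tfrac{n-k+1}{2k}\binom{n}{k}(n-k+1)$ is not what appears, so instead I note $\tfrac{1}{k}\cdot k\binom{n}{k}=\binom{n}{k}$ on the left while $\tfrac{n-k+1}{2k}\cdot$ — the cleaner route is to divide by $k$ and use $\binom{n}{k}\tfrac{?}{}$; concretely $\tfrac{n-k+1}{2}\cdot\tfrac{1}{k}\cdot\big(k\binom{n}{k}\big)$ is not present, so the correct normalization is: the left side is $\binom{n}{k}\int_\Omega(u-Kc_1)V$, and since $\binom{n}{k}=\tfrac{n-k+1}{k}\binom{n}{k-1}$ one rewrites $\tfrac{n-k+1}{2}\sigma_{k-1}$ terms relative to $\binom{n}{k-1}$; I would simply verify that dividing Lemma \ref{Th:Poho} (post-substitution) by $k$ and invoking $\tfrac{n-k+1}{k}\binom{n}{k}=$ the relevant combinatorial factor yields precisely \eqref{Eq:intuV}.

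The only genuinely delicate point — and the step I expect to absorb most of the care — is the bookkeeping of the coefficient of $\int_\Omega\sigma_{k-1}V\,\mathrm{d}\mu$: one must track the $(n-k+1)$ factors from the three applications of Lemma \ref{Th:bdryterm} i)/ii), the sign of each, and the division by $k$, and confirm they conspire to produce the clean factor $\binom{n}{k-1}$ on the left and $\tfrac12$ with no stray $(n-k+1)$ on the right. Everything else is a mechanical substitution and rearrangement, with no analysis or inequalities involved; the identities of Lemmas \ref{Th:Poho} and \ref{Th:bdryterm} do all the heavy lifting.
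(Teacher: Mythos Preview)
Your approach is exactly the paper's: substitute the two identities of Lemma~\ref{Th:bdryterm} into the Rellich--Poho\v{z}aev identity of Lemma~\ref{Th:Poho}, move the $k\binom{n}{k}Kc_{1}\int_{\Omega}V\,\mathrm{d}\mu$ term to the left, and collect the $\sigma_{k-1}V$ terms on the right. Your coefficient bookkeeping on the $\sigma_{k-1}$ pieces (in particular the check $-K^{3}c_{1}^{2}+\tfrac12K^{3}c_{1}^{2}=-\tfrac12K^{3}c_{1}^{2}$) is correct, and the intermediate identity
\[
k\binom{n}{k}\int_{\Omega}(u-Kc_{1})V\,\mathrm{d}\mu=\frac{n-k+1}{2}\int_{\Omega}\bigl(|\nabla u|^{2}+Ku^{2}-K^{3}c_{1}^{2}-c_{2}^{2}\bigr)\sigma_{k-1}V\,\mathrm{d}\mu
\]
is exactly what the paper obtains. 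The one place you wobble is the final normalization: you should divide by $(n-k+1)$, not by $k$, and then invoke $\dfrac{k}{n-k+1}\binom{n}{k}=\binom{n}{k-1}$ (equivalently $k\binom{n}{k}=(n-k+1)\binom{n}{k-1}$). That single division yields \eqref{Eq:intuV} directly, with $\binom{n}{k-1}$ on the left and a clean $\tfrac12$ on the right.
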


\begin{proof}
Substituting the identities in Lemma \ref{Th:bdryterm} into Lemma \ref{Th:Poho}, we have
\begin{align*}
k\binom{n}{k}\int_{\Omega}uV \mathrm{d}\mu&=k\binom{n}{k}c_{1}K\int_{\Omega}V\,\mathrm{d}\mu-(n-k+1)c_{1}^{2}K^{3}\int_{\Omega}\sigma_{k-1}V \,\mathrm{d}\mu \\
&\quad +\frac{n-k+1}{2}c_{1}^{2}K^{3}\int_{\Omega}\sigma_{k-1}V \,\mathrm{d}\mu +\frac{n-k+1}{2}K\int_{\Omega}\sigma_{k-1}u^{2}V \,\mathrm{d}\mu \\
&\quad -\frac{n-k+1}{2}c_{2}^{2}\int_{\Omega}\sigma_{k-1}V \,\mathrm{d}\mu +\frac{n-k+1}{2}\int_{\Omega}\sigma_{k-1}\abs{\nabla u}^{2}V \,\mathrm{d}\mu.
\end{align*}
The above equality is equivalent to
\begin{align*}
&k\binom{n}{k}\int_{\Omega}(u-Kc_{1})V \,\mathrm{d}\mu \\
&\qquad=\frac{n-k+1}{2}\int_{\Omega}(\abs{\nabla u}^{2}+Ku^{2}-K^{3}c_{1}^{2}-c_{2}^{2})\sigma_{k-1}V \,\mathrm{d}\mu.
\end{align*}
Noticing $\binom{n}{k-1}=\frac{k}{n-k+1}\binom{n}{k}$, we finish the proof.
\end{proof}

\section{Proof of Theorem \ref{Th:Hess}}
\label{Sec:ProofkHess}

First, we show that $u< Kc_{1}$ in $\Omega$.

Since
\begin{align*}
\sigma_{k}^{ij}u_{ij}=\sigma_{k}^{ij}(b_{ij}-Ku\delta_{ij})=k\sigma_{k}-(n-k+1)Ku\sigma_{k-1},
\end{align*}
we obtain
\begin{align*}
\sigma_{k}^{ij}u_{ij}+(n-k+1)Ku\sigma_{k-1}=k\binom{n}{k}>0.
\end{align*}
For the case $K=0$, by the strong maximum principle, the above inequality shows $u(x)<u\vert_{\partial\Omega}=Kc_{1}$ for all $x\in\Omega$.

For the case $K\neq 0$,  we consider $\bar{u}:=u-Kc_{1}$. Then
\begin{align*}
\Delta \bar{u}+nK\bar{u}=\Delta u+nKu-nK^{2}c_{1}=\sigma_{1}(b)-nK^{2}c_{1}.
\end{align*}

The Newton-MacLaurin inequality implies
\begin{equation*}
\frac{\sigma_{1}(b)}{n}\geq \left(\frac{\sigma_{k}(b)}{\binom{n}{k}}\right)^{\frac{1}{k}}=1.
\end{equation*}

Thus, we have
\begin{equation*}
\Delta \bar{u}+nK\bar{u}\geq n(1-K^{2}c_{1})\geq 0,
\end{equation*}
where the last inequality is from our assumption $K^{2}c_{1}\leq 1$.
 
Since $V$ is a positive function in $\Omega \subset M^{n}(K)$, we can set $w:=\dfrac{\bar{u}}{V}$. By direct computation we find that
\begin{align*}
\Delta w+\frac{2}{V}\langle \nabla V,\nabla w \rangle&=\frac{1}{V}(\Delta \bar{u}+nK\bar{u}).
\end{align*}
Thus,
\begin{equation*}
\Delta w+\frac{2}{V}\langle \nabla V,\nabla w \rangle\geq 0.
\end{equation*}

If $w$ is constant, $\bar{u}\vert_{\partial\Omega}=0$ implies $u=Kc_{1}$ which contradicts $u_{\nu}=c_{2}>0$. Therefore, the strong maximum principle implies $w<0$ in $\Omega$. Hence, $u<Kc_{1}$ in $\Omega$.

Now, from Lemma \ref{Th:P}, we know that either \[P=K^{3}c_{1}^{2}+c_{2}^{2}-2Kc_{1}\quad \text{in }~\overline{\Omega}, \] or \[ P<K^{3}c_{1}^{2}+c_{2}^{2}-2Kc_{1}\quad \text{in }~\Omega. \] And we only need to show the latter can not occur. If $P=\abs{\nabla u}^{2}+Ku^{2}-2u<K^{3}c_{1}^{2}+c_{2}^{2}-2Kc_{1}$ in $\Omega$, using $\sigma_{k-1}>0$ and $V>0$ in $\Omega$, we have
\begin{equation}\label{k=1}
\int_{\Omega}(\abs{\nabla u}^{2}+Ku^{2}-K^{3}c_{1}^{2}-c_{2}^{2})\sigma_{k-1}V \,\mathrm{d}\mu<2\int_{\Omega}(u-Kc_{1})\sigma_{k-1}V \,\mathrm{d}\mu.
\end{equation}

For the case $k=1$, the above inequality contradicts the equality \eqref{Eq:intuV} in Lemma \ref{Th:intP}. 

For the case $2\leq k\leq n$, the Newton-MacLaurin inequality implies 
\begin{equation*}
\frac{\sigma_{k-1}(b)}{\binom{n}{k-1}}\geq \left(\frac{\sigma_{k}(b)}{\binom{n}{k}}\right)^{\frac{k-1}{k}}=1.
\end{equation*}

Since $u< Kc_{1}$ in $\Omega$, we obtain
\begin{equation*}
\int_{\Omega}(\abs{\nabla u}^{2}+Ku^{2}-K^{3}c_{1}^{2}-c_{2}^{2})\sigma_{k-1}V \,\mathrm{d}\mu<2\binom{n}{k-1}\int_{\Omega}(u-Kc_{1})V \,\mathrm{d}\mu.
\end{equation*}
This also contradicts the equality \eqref{Eq:intuV} in Lemma \ref{Th:intP}. 

Thus $P$ must be constant in $\overline{\Omega}$. The rest of the proof is the same as in Section \ref{Sec:ProofQuoHess}.

{\bf Note added in proof.}
Just before we submitted the present paper, we were informed of the paper written by Z. Gao, X. Jia, and D. Zhang (arXiv: 2209.06268). They consider Serrin-type overdetermined problems for the Hessian quotient equations with zero Dirichlet condition in $\mathbb{R}^n$ and $\mathbb{H}^n$ and establish corresponding Rellich-Poho\v{z}aev type identity. Our results and methods are different from theirs. We consider overdetermined problems with constant Dirichlet boundary conditions in space forms, i.e., $\mathbb{R}^{n}$, $\mathbb{H}^{n}$ and $\mathbb{S}^{n}_{+}$. And we prove Lemma \ref{Th:binGammak}, so we do not additionally assume $\sigma_{l}(\nabla^{2}u+Kug)>0$ in $\overline{\Omega}$ as in their results. We also introduce the auxiliary function $\widetilde{P}$ for the Hessian quotient equations in space forms. As a result, the proof of Theorem \ref{Th:QuoHess} does not rely on Rellich-Poho\v{z}aev type identity. Recently, inspired by the work presented in the paper arXiv: 2209.06268, we can establish Rellich-Poho\v{z}aev type identity for problem \eqref{Eq:hessian}. Consequently, Theorem \ref{Th:QuoHess} holds if the star-shapedness condition is replaced with $K^2 c_1 \leq 1$.

\begin{acknow}
The authors would like to thank Chao Qian, Chao Xia and Jiabin Yin for their helpful conversations on this work. We are also grateful to the anonymous reviewer for helpful comments.

The first named author is partially supported by Natural Science Basic Research Program of Shaanxi (Program No.2022JQ-065), The Youth Innovation Team of Shaanxi Universities and Shaanxi Fundamental Science Research Project for Mathematics and Physics (Grant No.22JSZ012). The second and the third named authors are partially supported by National Natural Science Foundation of China (Grants No.~11831005 and No.~12061131014).
\end{acknow}




\end{document}